\theoremstyle{plain}
\newtheorem{definition}{Definition}[section]
\newtheorem{theorem}[definition]{Theorem}
\newtheorem*{theorem*}{Theorem}
\newtheorem*{remark*}{Remark}
\newtheorem*{sideremark*}{Side Remark}\newtheorem*{mt*}{Main Theorem}
\newtheorem*{claim*}{Claim}
\newtheorem*{q*}{Question}
\newtheorem{lemma}[definition]{Lemma}
\newtheorem*{corollary*}{Corollary}
\newtheorem*{proposition*}{Proposition}
\newtheorem{proposition}[definition]{Proposition}
\newcommand{\R}{\mathbb{R}}
\newcommand{\na}{\nabla}
\newcommand{\dd}{{\rm d}}
\newcommand{\p}{\partial}
\newcommand{\e}{\epsilon}
\newcommand{\emb}{\hookrightarrow}
\newcommand{\map}{\rightarrow}
\newcommand{\1}{\mathbbm{1}}
\newcommand{\ball}{{\mathbb{B}}}
\newcommand{\bo}{{\mathcal{O}}}
\newcommand{\id}{{\bf id}}
\newcommand{\sna}{\slashed{\na}}
\newcommand{\stwo}{{\mathbb{S}^2}}
\newcommand{\xx}{\frac{x}{|x|}}
\newcommand{\T}{{\Theta}}
\newcommand{\D}{\mathcal{D}}
\newcommand{\ee}{{\mathscr{E}}}
\def\XXint#1#2#3{{\setbox0=\hbox{$#1{#2#3}{\int}$ }
\vcenter{\hbox{$#2#3$ }}\kern-.6\wd0}}
\numberwithin{equation}{section}
\numberwithin{figure}{section}
\title{Stability of Minimising Harmonic Maps under $W^{1,p}$ Perturbations of Boundary Data: $p\geq 2$}
\author{Siran Li}
\address{Siran Li: New York University -- Shanghai, Office 1146, 1555 Century Avenue, Pudong District, Shanghai, China (200122)}
\address{NYU-ECNU Institute of Mathematical Sciences, Room 340, Geography Building, 3663 North Zhongshan Road, Shanghai, China (200062)}
\address{Current Address${}^\dagger$: School of Mathematical Sciences, Shanghai Jiao Tong University, No.~6 Science Buildings,
800 Dongchuan Road, Minhang District, Shanghai, China (200240)}
\email{\texttt{sl4025@nyu.edu}}
\keywords{Harmonic Maps; Harmonic Maps into Spheres; Boundary Value Problem; Stability; Singularities; Energy Minimisation.}
\subjclass[2010]{Primary: 53C43, 58E20}
\date{\today}
\begin{document}

\maketitle

\begin{abstract}
Let $\Omega \subset \R^3$ be a Lipschitz domain. Consider a harmonic map $v: \Omega \map \stwo$ with boundary data $v|\p\Omega = \varphi$ which minimises the Dirichlet energy. For $p\geq 2$, we show that any energy minimiser $u$ whose boundary map $\psi$ has a small $W^{1,p}$-distance to $\varphi$ is close to $v$ in H\"{o}lder norm modulo bi-Lipschitz homeomorphisms, provided that $v$ is the unique minimiser attaining the boundary data. %This proves the continuous dependence of harmonic maps on the boundary data in the  $W^{1,p}$-topology, $p>2$.
The index $p=2$ is sharp: the above stability result fails for $p<2$ due to the constructions by Almgren--Lieb \cite{al} and Mazowiecka--Strzelecki \cite{ms}.
\end{abstract}

\section{Introduction}

Let $u: \Omega \map \stwo$, where $\Omega$ is a Lipschitz domain in $\R^3$ and $\stwo$ is the unit $2$-sphere. We are concerned with the boundary value problem for the {\em harmonic map} equation:
\begin{equation}\label{harmonic map}
\begin{cases}
-\Delta u =|\na u|^2 u\qquad \text{ in }  \Omega,\\
u = \varphi\qquad \text{ on } \p\Omega.
\end{cases}
\end{equation}
This is the Euler--Lagrange equation for  minimisers of the Dirichlet energy
\begin{equation}
E[u] := \int_\Omega |\na u(x)|^2\,\dd x
\end{equation} 
over the space
\begin{equation}
W^{1,2}_\varphi(\Omega,\stwo) := \Big\{ u \in W^{1,2}(\Omega,\stwo) : u|\p\Omega = \varphi  \Big\}.
\end{equation}
The existence of minimisers are well-known for $\varphi \in W^{1/2,2}(\p\Omega,\stwo)$ in the sense of trace, due to the lower semi-continuity of the functional $E$. Also, for $\varphi \in W^{1,2}(\p\Omega,\stwo)$ the space $W^{1,2}_\varphi(\Omega,\stwo)$ is non-empty, as it contains the degree-$0$-homogeneous extension $\varphi(x\slash|x|)$. 

The weak solutions to \eqref{harmonic map} are called (weakly) {\em harmonic maps}. Minimisers of the Dirichlet integral clearly satisfy \eqref{harmonic map}, hence we call them {\em minimising harmonic maps}. The singular set of a harmonic map $u$, denoted by $sing\, u$, consists of the points that have an open neighbourhood in $\overline{\Omega}$ in which $u$ is not H\"{o}lder continuous --- equivalently, not real-analytic (\cite{su1, bg, moser}). We remark that there are non-minimising harmonic maps. As a prominent example,  Rivi\`{e}re \cite{r} constructed a harmonic map $v: \ball \map \stwo$ with $sing\, v = \overline{\ball}$, but Schoen--Uhlenbeck \cite{su1} proved that minimising harmonic map $u:\ball\map\stwo$ must have discrete singular sets ($\ball$ = the unit $3$-ball).

In this note, we study the stability of the minimising harmonic maps $u$  with respect to the boundary data $\varphi$. In a very interesting recent paper \cite{ms}, by elaborating on Almgren--Lieb's constructions in \cite{al}, Mazowiecka--Strzelecki proved that $u$ is highly non-stable under $W^{1,p}$-perturbations of $\varphi$ for $p<2$ and $\Omega=\ball$:
\begin{proposition}[Theorem 1.1 in \cite{ms}]\label{propn: p<2}
Let $\varphi \in C^\infty(\p\ball,\stwo)$ be a degree-$0$ boundary map. Let $1\leq p <2$ and $N \in \mathbb{N}$ be arbitrary. Then, for each $\e>0$, there exists $\psi \in C^\infty(\p\ball, \stwo)$ such that $\deg \psi =0$, $\|\varphi - \psi\|_{W^{1,p}} < \e$, $\mathcal{H}^2(\{\varphi \neq \psi\}) <\e$, and the Dirichlet integral has a unique minimiser over $W^{1,2}_\psi$ with at least $N$ singularities in $\ball$.
\end{proposition}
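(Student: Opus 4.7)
The plan is to build $\psi$ from $\varphi$ by implanting $N$ disjoint ``bubbles'' of concentrated topological degree on $\p\ball$, and then to break any residual symmetry with a generic perturbation so as to force uniqueness. Fix $N$ pairwise disjoint geodesic disks $D_1,\ldots,D_N \subset \p\ball$ of radius $r\ll 1$, with prescribed bubble degrees $d_i\in\{\pm 1\}$ summing to $0$ (if $N$ is odd, one more small balancing bubble is added outside the count). Pick a fixed smooth degree-$+1$ map $\Phi:\overline{\mathbb{D}^2}\to\stwo$ which equals a constant $c\in\stwo$ near $\p\mathbb{D}^2$, and set $\Phi_-:=\Phi\circ\sigma$ for an orientation-reversing reflection $\sigma$ of $\mathbb{D}^2$. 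On each $D_i$ first deform $\varphi$ to the constant $c$ along a thin annular collar via geodesic interpolation in $\stwo$ (possible as $\stwo$ is simply connected), then paste in an $r$-rescaled copy of $\Phi_{d_i}$ on the interior subdisk. The resulting $\psi\in C^\infty(\p\ball,\stwo)$ has $\deg\psi=\deg\varphi=0$ and agrees with $\varphi$ off $\bigcup_i D_i$, so $\mathcal{H}^2(\{\varphi\neq\psi\})\leq N\pi r^2<\e$ for $r$ small.

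The $W^{1,p}$ closeness follows from a direct scaling computation: pulling back by a dilation of factor $r$ on a two-dimensional surface contributes $r^{2-p}$ to any $L^p$ gradient integral, hence
\[
\int_{D_i}|\na(\varphi-\psi)|^p\,\dd\mathcal{H}^2 \;\leq\; C(\varphi,\Phi)\, r^{2-p},
\]
while the $L^p$-part of the norm is smaller still. Since $p<2$ the exponent is strictly positive, so summing over the $N$ bubbles gives $\|\varphi-\psi\|_{W^{1,p}(\p\ball)}\leq C(N,\varphi,\Phi)\,r^{(2-p)/p}$, which is $<\e$ for $r$ small.

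Each bubble then forces at least one interior singularity of any minimiser $u$ of $E$ over $W^{1,2}_\psi$. This is the Almgren--Lieb dipole mechanism: combining the interior $\e$-regularity of Schoen--Uhlenbeck with boundary regularity of minimisers and a degree-counting argument on small spheres, one shows that for $r$ below a threshold depending only on $\varphi$ and $\Phi$, any $2$-sphere in $\ball$ separating a bubble $D_i$ from the antipodal region must carry nonzero topological degree under $u$, and hence must enclose an interior point of $\sing\,u$ by the interior regularity theory. Summing over $i=1,\ldots,N$ and observing that the bubbles are mutually separated in $\ball$ yields $\#\sing\,u\geq N$.

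The main obstacle is uniqueness, since the configuration just built may admit a continuous family of minimisers generated by rotational or reflective symmetries of the bubbles. To handle this I would introduce a finite-dimensional admissible family $\{\psi_t\}_{t\in T}$ of perturbations of $\psi$ --- for instance small independent rotations of the target $\stwo$ applied in each bubble --- retained within the $W^{1,p}$- and area-budgets above, and argue that the set of $t\in T$ for which $E$ has more than one minimiser over $W^{1,2}_{\psi_t}$ is meagre (and of zero Lebesgue measure) in $T$. This rests on the upper-semicontinuous dependence of the minimiser set on the boundary datum together with a strict-convexity/transversality obstruction to non-uniqueness under generic perturbations, following the scheme developed in \cite{al} and refined in \cite{ms}. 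A generic choice of $t$ then provides the desired $\psi$, and shrinking $r$ further if necessary preserves all previous estimates.
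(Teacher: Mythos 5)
Before reviewing the content, note that the paper under review does not actually prove this statement: Proposition \ref{propn: p<2} is quoted verbatim as Theorem 1.1 of Mazowiecka--Strzelecki \cite{ms}, and no proof is offered here. So there is no in-paper argument to compare against; what follows is an assessment of your sketch against what the underlying argument must accomplish.

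Your outline captures the part of the \cite{ms}/\cite{al} strategy that is genuinely elementary: inserting $N$ concentrated bubbles of radius $r$ and exploiting the scaling $\int_{D_i}|\na(\varphi-\psi)|^p\,\dd\mathcal{H}^2\sim r^{2-p}\to 0$ for $p<2$, which correctly explains both the $W^{1,p}$-smallness and the $\mathcal{H}^2(\{\varphi\neq\psi\})<\e$ estimate. That part is sound and is exactly what makes $p<2$ special.

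The step that fails is the claim that ``any $2$-sphere separating a bubble $D_i$ from the antipodal region must carry nonzero topological degree under $u$, and hence must enclose an interior singularity.'' For any minimiser $u$ that is continuous on $\overline\ball$, every interior sphere $S\subset\ball$ has $\deg(u|S)=0$, because both complementary components of $\ball\sim S$ give consistent degree computations using $\deg\psi=0$; there is no topological obstruction to a smooth extension of a degree-zero map. Equally, a sphere that does separate $D_i$ topologically from the rest of $\p\ball$ must intersect $\p\ball$, in which case $\deg(u|S)$ is not even an integer. In short, topology alone cannot force an interior singularity from a degree-zero boundary datum. What the Almgren--Lieb dipole lemma actually uses is an energy argument (built on the Brezis--Coron--Lieb minimal connection / lower bound for smooth competitors), not a degree obstruction; this energy comparison is where the hypothesis ``$r$ small'' is truly used, and it is the mechanism you need to invoke rather than interior $\e$-regularity plus degree counting. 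Relatedly, \cite{al,ms} arrange the bubbles in tight $\pm 1$ dipoles precisely so that the minimal-connection lower bound bites; $N$ widely-spaced bubbles of degree $\pm 1$ may or may not force $N$ singularities without a corresponding adaptation of the energy estimate.

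The uniqueness step is also underdeveloped. You invoke ``upper-semicontinuous dependence of the minimiser set'' plus a ``strict-convexity/transversality obstruction'' to conclude genericity of uniqueness. But the Dirichlet energy over $W^{1,2}(\ball,\stwo)$ is not strictly convex --- multiple minimisers for the same boundary datum occur robustly (see \S 5 of \cite{hl}, cited in this paper) --- so no convexity argument can close this. The actual ingredient from Almgren--Lieb \cite{al} is a bespoke perturbation lemma (shrinking the boundary map toward one of the competing minimisers and tracking the resulting strict energy gap), combined with a lower semi-continuity statement for the number of singularities under strong $W^{1,2}$ convergence of minimisers, so that the final uniqueness-restoring perturbation does not destroy the $N$ singularities already manufactured. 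Both of these points need to be stated and proved; they are not consequences of soft genericity.
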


In contrast, R. Hardt and F.-H. Lin \cite{hl} proved that a minimising harmonic map is stable under Lipschitz perturbations of the boundary data, under an additional uniqueness assumption:
\begin{proposition}[The Stability Theorem in \cite{hl}]\label{propn: lipschitz}
Let $\Omega \subset \R^3$ be a smooth bounded domain and $\varphi \in {\rm Lip}(\p\Omega, \stwo)$. Suppose $v$ is the unique energy-minimising map from $\Omega$ to $\stwo$ with $v|\p\Omega =\varphi$. Then there exist a positive number $\beta >0$ and, for any $\e>0$, a positive number $\delta >0$, such that for any $\psi \in C^{1,\alpha}(\p\Omega, \stwo)$ with $\|\varphi - \psi\|_{\rm Lip} \leq \delta$ and any energy-minimising $u \in W^{1,2}(\Omega,\stwo)$ with $u|\p\Omega=\psi$, one has $\|u-v\circ\eta\|_{C^{0,\beta}} \leq \e$ for a bi-Lipschitz map $\eta :\Omega \map \Omega$ with $\|\eta - \id_\Omega\|_{\rm Lip} \leq \e$.
\end{proposition}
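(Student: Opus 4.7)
The plan is a compactness-and-uniqueness argument by contradiction. Suppose the conclusion fails: there exist $\e_0 > 0$, a sequence $\psi_k \in C^{1,\alpha}(\p\Omega,\stwo)$ with $\|\psi_k - \varphi\|_{\rm Lip} \to 0$, and minimisers $u_k \in W^{1,2}_{\psi_k}(\Omega,\stwo)$ such that $\|u_k - v\circ\eta\|_{C^{0,\beta}} > \e_0$ for every bi-Lipschitz $\eta : \Omega \to \Omega$ with $\|\eta - \id\|_{\rm Lip} \leq \e_0$. A uniformly bounded Lipschitz extension of $\psi_k$ to $\Omega$ with values in $\stwo$ (obtained, say, by projecting a linear $\R^3$-valued extension onto $\stwo$ via the nearest-point retraction, valid because $\psi_k$ stays close to a fixed Lipschitz map) provides a comparison map of uniformly bounded Dirichlet energy, so $E[u_k] \leq C$. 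Extracting a weakly convergent subsequence $u_k \weak u$ in $W^{1,2}(\Omega,\stwo)$, one has $u|\p\Omega = \varphi$ by trace continuity.

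Next I would identify $u$ as an energy minimiser with boundary datum $\varphi$ by a Luckhaus-type bridge: for any competitor $w \in W^{1,2}_\varphi(\Omega,\stwo)$, interpolate the small discrepancy $\psi_k - \varphi$ across a thin collar inside $\Omega$ to produce $w_k \in W^{1,2}_{\psi_k}(\Omega,\stwo)$ with $E[w_k] \to E[w]$. Minimality of $u_k$ and weak lower semicontinuity then give $E[u] \leq E[w]$, so the uniqueness hypothesis forces $u = v$. Taking $w = v$ in the same construction furnishes energy convergence $E[u_k] \to E[v]$ and hence strong $W^{1,2}$-convergence $u_k \to v$.

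With strong convergence in hand, the Schoen--Uhlenbeck $\e$-regularity --- applied up to $\p\Omega$ using the Lipschitz bounds on $\varphi$ and $\psi_k$ --- yields uniform $C^{1,\alpha}$-estimates for $u_k$ on any compact subset of $\overline{\Omega} \setminus \sing v$, and therefore $C^{0,\beta}$-convergence $u_k \to v$ there. Since $\sing v = \{p_1,\ldots,p_m\}$ is finite (Schoen--Uhlenbeck, recalled in the excerpt), upper semicontinuity of the singular set under strong $W^{1,2}$-convergence confines $\sing u_k$, for large $k$, into arbitrarily small balls $B_r(p_i)$; the degree of $u_k|\p B_r(p_i)$ equals that of $v|\p B_r(p_i)$, so each cluster carries the same net topological charge as the original singularity. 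I would then construct $\eta_k$ as the identity outside $\bigcup_i B_{2r}(p_i)$ and as a radial piecewise-affine diffeomorphism on each $B_{2r}(p_i)$ mapping $p_i$ onto the centroid (or the unique nearby singularity) of $\sing u_k \cap B_r(p_i)$, producing a global bi-Lipschitz map with $\|\eta_k - \id\|_{\rm Lip} = o(1)$.

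The hardest step is matching $u_k$ to $v \circ \eta_k$ in $C^{0,\beta}$ near the singularities, because two minimising harmonic maps sharing an isolated singular point may differ by a rotation of $\stwo$ in their tangent maps. I would rule out such persistent rotational slippage by a blow-up/compactness argument: any rescaled subsequential limit of $u_k$ around its singularity in $B_r(p_i)$ is a minimising tangent map of the same degree as the tangent map of $v$ at $p_i$, and strong $W^{1,2}$-convergence together with the classification of degree-$\pm 1$ minimising tangent maps from $\R^3$ to $\stwo$ (rotations of $\pm x/|x|$) forces it to coincide with the tangent map of $v$. Thus the local $C^{0,\beta}$-gap vanishes, and combining this with the regularity-driven convergence away from $\sing v$ contradicts the standing assumption, completing the proof. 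All other ingredients become fairly standard once strong $W^{1,2}$-convergence is secured; this rotational rigidity near isolated singularities is the true crux.
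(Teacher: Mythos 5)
Your overall strategy (contradiction + compactness + strong $W^{1,2}$-convergence + regularity away from $\sing v$ + a bi-Lipschitz repositioning map near the singularities) is exactly the Hardt--Lin architecture that the paper follows for its Theorem 1.3, of which this proposition is the $p=\infty$ special case. The first three steps are essentially sound as sketched. However, there are two genuine gaps in the part you yourself flag as the crux.

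\textbf{The rotational-rigidity step is not actually an argument.} You write that ``strong $W^{1,2}$-convergence together with the classification of degree-$\pm 1$ minimising tangent maps (rotations of $\pm x/|x|$) forces [the rescaled limit] to coincide with the tangent map of $v$.'' This does not follow. Classification tells you the tangent map of $u_k$ at its singularity is $\T_k(\cdot)$ for \emph{some} $\T_k\in\bo(3)$; it gives no mechanism to relate $\T_k$ to the rotation $\T$ of $v$'s tangent map at $p_i$. Strong $W^{1,2}$-convergence of $u_k\to v$ on $\Omega$ likewise does not control the tangent map of $u_k$, because the tangent map is a double limit (in $k$ and in the blow-up scale) and $W^{1,2}$-convergence degenerates precisely at the singularity. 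The missing ingredient is Simon's asymptotic theorem (Proposition 3.1 in the paper and the quantitative form \eqref{new 2}): it furnishes (a) uniqueness of the tangent map, (b) a \emph{rate} $|u_k(a_k+r\omega)-\T_k\omega|=O(|\log r|^{-\alpha})$ uniform near the singularity, and hence (c) a way to pin down $\T_k$ from the behaviour of $u_k$ on a fixed annulus $\ball(p_i,2r)\setminus\ball(p_i,r)$, where one \emph{does} have $C^2$-convergence $u_k\to v$ by interior regularity. This is precisely how the paper derives \eqref{matrix norm close} $\|\T-\id_{\R^3}\|\leq c\,\ee$ with $\ee$ the $C^2$-distance on an annulus, and then \eqref{B 1/2} to obtain $C^{0,\alpha}$-closeness inside the ball. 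Without Simon's theorem you cannot upgrade from $W^{1,2}$- to $C^{0,\beta}$-control at the singularity: the two topologies are genuinely different there, and ``the local $C^{0,\beta}$-gap vanishes'' is asserted rather than proved. Relatedly, your appeal to upper semicontinuity of $\sing u_k$ plus a degree count gives \emph{existence} of a nearby singularity but not \emph{uniqueness}; uniqueness (Section 3.1 of the paper) again rests on Simon's asymptotics after a blow-up, and is needed before you may speak of ``the unique nearby singularity.''

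\textbf{A minor but real slip in the construction of $\eta_k$.} You propose $\eta_k$ ``mapping $p_i$ onto the \ldots\ nearby singularity of $\sing u_k$.'' This is the wrong direction: $v\circ\eta_k$ is singular at $\eta_k^{-1}(p_i)$, so for $v\circ\eta_k$ to share its singularity with $u_k$ you must send the singularity of $u_k$ \emph{to} $p_i$, i.e.\ $\eta_k(a_{ki})=p_i$, as in the paper's definition of $\eta_i$ via $\xi_{ji}(x)=\T_j^{-1}\T_{ji}(x-a_{ji})+a_j$. Note also that the linear part $\T_j^{-1}\T_{ji}$ is there to absorb the residual rotation mismatch between the two tangent maps, which is another reason a quantitative bound on $\|\T_{ji}-\T_j\|$ is indispensable: it is what keeps $\|\eta_i-\id\|_{\rm Lip}$ small.

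In short, your scaffolding matches the paper's, but the hard step --- converting strong $W^{1,2}$-convergence into $C^{0,\beta}$-closeness across the singularities --- requires Simon's asymptotic/uniqueness-of-tangent-map theorem in its quantitative form, and the soft ``blow-up plus classification'' substitute you propose does not deliver it.
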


Our main result shows that, under the same assumptions of \cite{hl}, minimising harmonic maps are stable under $W^{1,p}$-perturbations of the boundary data for any $p\geq 2$. It demonstrates the sharpness of the index $p=2$ in Proposition \ref{propn: p<2}; Proposition \ref{propn: lipschitz} is the special case $p=\infty$.

\begin{theorem}\label{thm: W1,p, p>2}
	Let $\Omega \subset \R^3$ be a bounded Lipschitz domain and $\varphi \in W^{1,p}(\p\Omega,\stwo)$ for $p\geq 2$. Suppose $v$ is the unique energy-minimising map from $\Omega$ to $\stwo$ with $v|\p\Omega=\varphi$. Then there exist a positive number $\beta>0$ and, for any $\e>0$, a positive number $\delta>0$, such that for any $\psi \in C^{1,\alpha}(\p\Omega,\stwo)$ with $\|\varphi-\psi\|_{W^{1,p}} \leq \delta$ and any energy-minimising $u \in W^{1,2}(\Omega,\stwo)$ with $u|{\p\Omega}=\psi$, one has $\|u-v\circ\eta\|_{C^{0,\beta}} \leq \e$ for a bi-Lipschitz map $\eta :\Omega \map \Omega$ with $\|\eta - \id_\Omega\|_{\rm Lip} +\|\eta^{-1} - \id_\Omega\|_{\rm Lip} \leq \e$.
\end{theorem}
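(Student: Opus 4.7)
The plan is to argue by contradiction and reduce to Hardt--Lin's Proposition~\ref{propn: lipschitz}. Assume the conclusion fails: there exist $\e_0>0$, maps $\psi_k \in C^{1,\alpha}(\p\Omega,\stwo)$ with $\psi_k \to \varphi$ in $W^{1,p}(\p\Omega,\stwo)$, and energy-minimisers $u_k$ with $u_k|\p\Omega=\psi_k$, such that $\|u_k - v\circ\eta\|_{C^{0,\beta}} > \e_0$ for every bi-Lipschitz $\eta$ satisfying $\|\eta - \id_\Omega\|_{\rm Lip} + \|\eta^{-1}-\id_\Omega\|_{\rm Lip} \leq \e_0$.

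The heart of the argument is the energy comparison
\begin{equation*}
\limsup_{k\to\infty} E[u_k] \leq E[v].
\end{equation*}
Granted this, weak $W^{1,2}$-compactness together with weak continuity of traces yields a subsequential limit $u_\infty$ with $u_\infty|\p\Omega=\varphi$. Lower semi-continuity of $E$ and the uniqueness hypothesis then force $u_\infty = v$ and $E[u_k]\to E[v]$, which upgrades weak to strong convergence in $W^{1,2}(\Omega,\R^3)$. Once $u_k \to v$ strongly in $W^{1,2}$ and each $u_k$ is energy-minimising, the Hardt--Lin singular-set machinery (Schoen--Uhlenbeck $\e$-regularity, interior and boundary regularity, tangent-map analysis near isolated singularities, and the bi-Lipschitz matching carried out in \cite{hl}) produces a bi-Lipschitz $\eta_k$ with $\|\eta_k - \id_\Omega\|_{\rm Lip} + \|\eta_k^{-1} - \id_\Omega\|_{\rm Lip} \to 0$ and $\|u_k - v\circ\eta_k\|_{C^{0,\beta}} \to 0$, contradicting the assumption.

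The energy comparison is established by constructing a competitor $w_k \in W^{1,2}_{\psi_k}(\Omega,\stwo)$ with $E[w_k] \leq E[v] + o(1)$. In a Lipschitz tubular neighbourhood $\Omega_\sigma := \{x \in \Omega : \mathrm{dist}(x,\p\Omega) < \sigma\}$, place a bi-Lipschitz contraction of $v$ onto $\Omega\setminus\Omega_\sigma$ and perform a Luckhaus-type normal interpolation through the collar. The tangential contribution is bounded by $C\sigma\bigl(\|\psi_k\|_{W^{1,2}(\p\Omega)}^2 + \|\varphi\|_{W^{1,2}(\p\Omega)}^2\bigr)$ and the normal contribution by $C\sigma^{-1}\|\psi_k-\varphi\|_{L^2(\p\Omega)}^2$. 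Since $\|\psi_k - \varphi\|_{L^2(\p\Omega)} \to 0$ by Rellich--Kondrachov, optimising $\sigma=\sigma_k \to 0$ delivers the bound.

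The chief obstacle is ensuring the interpolated map takes values in $\stwo$ rather than its ambient $\R^3$. For $p>2$, Morrey's embedding places $W^{1,p}(\p\Omega)$ inside $C^{0,1-2/p}(\p\Omega)$, so $\psi_k \to \varphi$ uniformly and the on-sphere geodesic interpolation is globally well-defined with controlled derivatives. The borderline case $p=2$ loses this pointwise control; one partitions $\p\Omega$ into the good subset $G_k := \{|\psi_k - \varphi| \leq 1/4\}$, where geodesic interpolation is valid, and a bad subset $B_k$ whose $\mathcal{H}^2$-measure tends to zero by Chebyshev. On $B_k$ one contracts to a fixed regular value of $\stwo$ and glues to the good region through a thin Luckhaus bridge whose energy is controlled via the exponential integrability of $W^{1,2}$ traces (Trudinger--Moser), reconciling the critical Sobolev regime with the on-sphere constraint.
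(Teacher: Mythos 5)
Your overall skeleton — establish strong $W^{1,2}$ convergence $u_k \to v$ and then run the Hardt--Lin singular-set matching — is in the same spirit as the paper's Section \ref{sec: general}, and your Luckhaus-collar energy comparison is a legitimate alternative to the paper's simpler competitors (comparing with $\psi_i(x/|x|)$ in the model case, and with harmonic extensions in the general case). The interior part of the matching — interior $\e$-regularity giving $C^k_{\rm loc}$ convergence away from $\sing v$, followed by Simon's asymptotics near each $a_j$ — also works, since on a sphere $\p\ball(a_j,\tau)$ well inside $\Omega$ the strong $W^{1,2}$ convergence already upgrades to Lipschitz closeness of the traces, and the original Hardt--Lin Perturbation Lemma applies there.

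The genuine gap is at $\p\Omega$. You write that once $u_k \to v$ strongly in $W^{1,2}$, ``the Hardt--Lin singular-set machinery (Schoen--Uhlenbeck $\e$-regularity, interior and boundary \emph{regularity}, \ldots)'' produces $\eta_k$. But the boundary regularity of Schoen--Uhlenbeck \cite{su2}, Hardt--Kinderlehrer--Lin \cite{hkl}, and Hardt--Lin \cite{hl} is a \emph{Lipschitz-based} $\e$-regularity: the radius of uniform H\"{o}lder continuity near $\p\Omega$ and the H\"{o}lder constant depend on $\|\psi_k\|_{\rm Lip}$. Here each $\psi_k\in C^{1,\alpha}$ is individually Lipschitz, but only $\|\psi_k\|_{W^{1,p}}$ is controlled (it converges to $\|\varphi\|_{W^{1,p}}$); $\|\psi_k\|_{\rm Lip}$ may blow up. Thus the boundary $\e$-regularity constants could degenerate as $k\to\infty$, and you cannot conclude that $\|u_k - v\|_{C^{0,\beta}}$ is small near $\p\Omega$, which is exactly what is needed to choose $\eta_k = \id$ near the boundary. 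For the same reason, ``reducing'' the problem to Proposition~\ref{propn: lipschitz} is not possible: $W^{1,p}$-closeness of traces does not imply Lipschitz closeness, so that proposition's hypothesis is never verified. Supplying a \emph{uniform} boundary H\"{o}lder estimate for minimisers whose trace is merely small in $W^{1,p}$ ($p\geq 2$) is precisely the content of the paper's Lemma~\ref{lem: boundary reg}, proved via the Caccioppoli-type inequality \eqref{caccioppoli} (harmonic extension, Sard/Fubini selection of the nearest-point projection $\Pi_a$, a geometric bound on $(\Pi_a|\p\ball_{\sigma'})^{-1}$), the differential inequality \eqref{ineq for D} giving the absolute bound \eqref{abs bound}, and a blowup argument combining the monotonicity formula with the Liouville-type Lemma~\ref{lem: hardt-lin}. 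This is the main new technical input of the paper, and your proposal takes it for granted.

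A smaller issue: the paper's final estimate is quantitative — the Perturbation Lemma~\ref{lemma: perturbation} gives explicit rates $|a|\leq c\sqrt{\delta}$, $\|u-\T((x-a)/|x-a|)\|_{C^{0,\alpha}}\leq c\delta^{1/4}$, and these are what control $\|\eta - \id\|_{\rm Lip}$ and the H\"{o}lder distance in the gluing step. Your soft compactness argument could in principle recover a qualitative version, but only after the boundary regularity gap above is filled; so the missing ingredient is the same in either case.
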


The arguments essentially follow \cite{hl} by Hardt--Lin. We remark that the uniqueness of $v$ is necessary: see $\S 5$, \cite{hl} for an example of a smooth boundary map that serves as boundary data for two minimisers from $\ball$ to $\stwo$, one with no singularity and the other with two singularities. Moreover, Almgren--Lieb \cite{al} proved that the boundary data with unique minimisers are dense in the $W^{1,2}$-topology.

 %The end-point case $p=2$ is open for future investigations.

\noindent
{\bf Notations.} For embedded surfaces $\Sigma \subset \R^3$, we write $\dd A$ for the surface measure on $\Sigma$, and $\sna$ for the projection of the Euclidean gradient on $\R^3$ to $T\Sigma$. In the spherical polar coordinates, we write $x=r\omega$ for $r=|x|, \omega = x/|x|\in\stwo$, the unit $2$-sphere. For an $m$-dimensional submanifold $M$ of $\R^n$, $|M|$ denotes the $m$-dimensional Hausdorff measure of $M$. We write $\ball(x,\rho)$ for an Euclidean $3$-ball with centre $x$ and radius $\rho$;  $\ball_\rho := \ball(0,\rho)$ and $\ball:=\ball_1$. For sets $E$ and $F$, we write $E\sim F$ for the set difference, and $\1_E$ for the indicator function on $E$. The norms $\|\bullet\|_{W^{1,p}}, \|\bullet\|_{\rm Lip}$ and $\|\bullet\|_{C^{0,\beta}}$ without explicitly indicating the domains are taken over the whole of $\Omega, \ball$ or $\stwo$, which will be clear from the context. $\bo(3)$ is the group of $3 \times 3$   orthogonal matrices.

\bigskip
\noindent
{\bf Acknowledgement}.
This work has been done during Siran Li's stay as a CRM--ISM postdoctoral fellow at Centre de Recherches Math\'{e}matiques, Universit\'{e} de Montr\'{e}al and Institut des Sciences Math\'{e}matiques, and as a G.~C. Evans Instructor at Rice University, Houston. The author would like to thank these institutions for providing nice working atmosphere. 

We are greatly indebted to Bob Hardt for many insightful discussions and continuous support. We thank Armin Schikorra for kind communications and insightful discussions.

\bigskip
\noindent
{\bf Note added}. Upon completion of the paper, the author was informed of the very nice work \cite{mms} by Mazowiecka--Mi\'{s}kiewicz--Schikorra, in which a generalisation of Hardt--Lin's stability theorem is obtained independently. In \cite{mms} the stability in $W^{1,2}$-norm is proved for minimising harmonic maps with trace in $W^{s,p}$ for $s\in ]1/2,1]$, $p \in [2,\infty[$ such that $ps \geq 2$, provided that the traces are $W^{s,p}$-close. This may be compared with Theorem \ref{thm: W1,p, p>2} above, in which we proved the stability in $C^{0,\beta}$-norm with traces in $C^{1,\alpha}$ being $W^{1,p}$-close. Additionally, in \cite{mms} Almgren--Lieb's linear law on the number of singularities is also extended to the case of $W^{s,p}$-traces.

\section{Uniform Boundary Regularity}\label{sec: bdry reg}
%We first show that a minimiser with bounded $W^{1,p}$-norm on the boundary of a Lipschitz domain is non-singular in a uniform neighbourhood of the boundary. 

In this section, we establish the following
\begin{lemma}\label{lem: boundary reg}
There exist constants $0<e_0, \ell_0 \leq 1$ and $\rho_0=\rho_0(\ell_0,e_0)>0$ such that the following holds. Let $g:\R^2 \map \R$ be a Lipschitz map with $g(0)=0=|\na g(0)|$ and $\|g\|_{W^{1,\infty}} \leq \ell_0$. Denote by $\Omega_g := \{(x_1,x_2,x_3)\in\ball: x_3 < g(x_1,x_2)\}$. Assume that $u \in W^{1,2}(\Omega_g,\stwo)$ is an energy-minimising map with 
$\| u|\ball \cap \p\Omega_g \|_{W^{1,p}} \leq e_0$; $2 \leq p \leq \infty$. Then $\| u|\ball_{\rho_0} \cap {\Omega_g}\|_{C^{0,\beta}} \leq e_0$ for some $0<\beta<1$. \end{lemma}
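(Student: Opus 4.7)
\emph{Proof plan.} The plan is to establish a scale-invariant smallness bound for the Dirichlet energy of $u$ in small half-balls near the flat boundary, and then invoke boundary $\e$-regularity for minimising $\stwo$-valued harmonic maps to get the H\"{o}lder estimate. I would proceed in three steps: flatten the boundary, execute a hole-filling comparison to obtain smallness of the normalised energy, and then apply boundary $\e$-regularity.

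First I flatten. Because $\|g\|_{W^{1,\infty}}\le\ell_0\ll 1$ with $g(0)=|\na g(0)|=0$, the map $\Psi(x_1,x_2,x_3):=(x_1,x_2,x_3-g(x_1,x_2))$ is globally bi-Lipschitz with distortion $1+O(\ell_0)$ and carries $\Omega_g\cap\ball$ into the half-ball $H:=\ball\cap\{x_3<0\}$. It pulls back the Euclidean Dirichlet energy to $\int a_{ij}\p_i u\,\p_j u\,\dd x$ with uniformly elliptic $a_{ij}$ within $O(\ell_0)$ of $\delta_{ij}$; minimality, $W^{1,p}$-smallness of the trace $\psi:=u|D$ on $D:=\ball\cap\{x_3=0\}$, and H\"{o}lder estimates all transfer with harmless multiplicative constants. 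I henceforth work on $H$, writing $E(\rho):=\int_{H\cap\ball_\rho}|\na u|^2\,\dd x$.

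Next I show the main estimate $\rho^{-1}E(\rho)\le Ce_0^2$ for $\rho\le\rho_0$. By a Fubini/coarea argument, a generic slice $r\in(\rho,2\rho)$ satisfies
\begin{equation*}
\int_{\p\ball_r\cap H}|\sna u|^2\,\dd A\ \le\ \frac{C}{\rho}\bigl(E(2\rho)-E(\rho)\bigr),
\end{equation*}
while H\"{o}lder's inequality with $p\ge 2$ and $|D\cap\ball_r|\le C\rho^2$ yields $\int_{D\cap\ball_r}|\sna\psi|^2\,\dd A\le C\rho^{2-4/p}e_0^2\le Ce_0^2$. A Luckhaus-type construction---linear extension of the boundary trace of $u$ on $\p(H\cap\ball_r)$ into a thin annular shell, nearest-point retraction onto $\stwo$ on a good sub-slice, then constant interpolation near the centre---produces a competitor $\tilde u\in W^{1,2}(H\cap\ball_r,\stwo)$ agreeing with $u$ on $\p(H\cap\ball_r)$ with $\int_{H\cap\ball_r}|\na\tilde u|^2\,\dd x\le C(E(2\rho)-E(\rho))+C\rho e_0^2$. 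Minimality of $u$ then yields the hole-filling inequality $E(\rho)\le C(E(2\rho)-E(\rho))+C\rho e_0^2$, which, together with the global bound $E(1)\le Ce_0^2$ obtained by testing $u$ against the degree-zero extension of $\psi$, iterates in the standard way to $E(\rho)/\rho\le Ce_0^2$ for all $\rho\le\rho_0$.

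Finally, with both $\rho_0^{-1}E(\rho_0)$ and the boundary trace sufficiently small, the boundary $\e$-regularity theorem for $\stwo$-valued minimisers (Schoen--Uhlenbeck, as refined in \cite{hl}) delivers $\|u\|_{C^{0,\beta}(H\cap\ball_{\rho_0})}\le e_0$ for some $\beta\in(0,1)$, which transfers back under $\Psi^{-1}$ to the sought statement on $\ball_{\rho_0}\cap\Omega_g$. The hard part will be the borderline $p=2$: on the $2$-dimensional disk $D$, $W^{1,2}$ just fails to embed into $L^\infty$, so the nearest-point retraction onto $\stwo$ in the Luckhaus step is not automatic. The remedy is that a small $W^{1,2}$ Dirichlet energy of $\psi$ forces its image to have small area in $\stwo$ (via the Jacobian bound $|\sna\psi|^2\ge 2|J(\psi)|$), and a further Chebyshev/slicing argument identifies a shell on which the extension has small oscillation and can be safely retracted. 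For $p>2$, Morrey's embedding makes $\psi$ H\"{o}lder continuous and the whole construction is immediate; thus the $p=2$ case dictates the quantitative choices of $e_0$ and $\ell_0$.
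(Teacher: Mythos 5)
Your approach diverges from the paper's in a structurally important way, and it contains a genuine gap at the point where the hole-filling recursion is anchored. You claim a global bound $E(1)\leq C e_0^2$ ``obtained by testing $u$ against the degree-zero extension of $\psi$.'' This does not work: $u$ is energy-minimising among competitors that agree with $u$ on \emph{all} of $\partial\Omega_g$, including the spherical portion $\partial\ball\cap\overline{\Omega_g}$, and the hypotheses of the lemma give no control whatsoever on the trace of $u$ there --- only on the graph piece $\ball\cap\partial\Omega_g$. So any comparison map built from $\psi$ alone is not admissible, and in fact $E(1)=\int_{\Omega_g}|\na u|^2$ can be arbitrarily large. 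Without this anchor, the iteration $E(\rho)\leq\theta E(2\rho)+C\rho e_0^2$ yields $E(\rho_k)\leq\theta^k E(1)+\text{(tail)}$, so the scale $\rho_0$ at which the normalised energy falls below the $\e$-regularity threshold would have to depend on $E(1)$, hence on $u$ itself; this is exactly what the lemma's uniformity forbids. A further, smaller issue: to conclude $E(\rho)/\rho\leq Ce_0^2$ at \emph{all} small scales one needs the hole-filling ratio $\theta<1/2$, which is not automatic from a Luckhaus-type competitor; this is fixable by only asking for smallness at one fixed scale, but the missing anchor is not.

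The paper sidesteps the absent global bound with a different comparison. Instead of a $0$-homogeneous or Luckhaus extension, it uses the \emph{harmonic extension} $h$ of the boundary values on a generic sphere, exploits the identity \eqref{harm map identity} and the fact that $u$ is $\stwo$-valued (so $\|u-\lambda\|_{L^2(\p\ball_{\sigma'})}$ is an \emph{absolute} constant), and after projecting by $\Pi_a$ obtains a comparison map whose energy is bounded by the \emph{square root} of the boundary tangential energy. This produces the differential inequality $\D(\sigma)\leq c\{\D'(\sigma)+\mathrm{const}\}^{1/2}$, whose quadratic structure (used via $\D'/\D^2\geq 1/c^2-\mathrm{const}\cdot\theta^2$ and integration on $[1/2,1]$) yields an \emph{a priori} bound $\D(1/2)\leq c_1(p,\ell_0)$ requiring only $\D(1)<\infty$, not any quantitative bound on $\D(1)$. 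This is exactly the absolute bound your iteration is missing. The paper then upgrades the absolute bound to normalised smallness not by iteration but by a blowup/compactness argument: negate \eqref{normalised energy bound}, rescale, extract a strong $W^{1,2}$-limit that is degree-$0$-homogeneous and constant on the flat face, and invoke the rigidity result of Lemma \ref{lem: hardt-lin} (Theorem 5.7 in \cite{hl2}) to derive a contradiction. If you want to retain a hole-filling flavour you would still need the paper's harmonic-extension trick (or some other source of a uniform $\D(1/2)$ bound) to anchor it; alternatively, adopting the paper's compactness step avoids the anchor problem entirely at the cost of invoking the Hardt--Lin rigidity lemma.
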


%The bound in Assumption $(1)$ can be replaced by any finite number; the argument carries through with obvious modifications.

The proof of Lemma \ref{lem: boundary reg}  follows from an adaptation of $\S \S 5.4, 5.5$,  Hardt--Kinderlehrer--Lin \cite{hkl} and $\S 2$, Hardt--Lin \cite{hl}, in both of which the boundary data are assumed to be Lipschitz. On the other hand,  if $\Omega_g$ is $C^\infty$ additionally, then we recover Corollary $2.5$, Almgren--Lieb \cite{al}.

%In \cite{hkl, hl} the boundary data are assumed to be Lipschitz, and in \cite{al} the domain is $C^\infty$. Modifications are thus needed for Lemma \ref{lem: boundary reg}, which assumes lower regularity for the domain and the boundary map. 

We need to modify the arguments in \cite{hkl, hl2, al} to deal with the lower regularity assumptions for the boundary map and the domain. One useful result is Theorem $5.7$, Hardt--Lin \cite{hl2}: 
\begin{lemma}\label{lem: hardt-lin}
Let $m$ be a positive integer,  let $N$ be a smooth Riemannian manifold, and let $1 < p < \infty$. Denote by $\ball^+ := \{(x^1,\ldots, x^m\in\R^m: \sum_{i=1}^m |x^i|^2 <1, x^m >0)\}$. If $u_0 \in W^{1,p}(\ball^+, N)$ is a degree-$0$-homogeneous $p$-minimising harmonic map, and if $u_0$ is constant on $\p\ball^+ \cap \{x^m=0\}$, then $u_0$ is a  constant function.
\end{lemma}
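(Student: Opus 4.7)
The plan is to show by a direct energy comparison that $u_0$ must be constant, deriving a contradiction from the minimality of $u_0$ if $u_0 \not\equiv c$.

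Writing $u_0(x) = w(x/|x|)$ for $w:\stwo_+\to N$ with $w|_{\p\stwo_+}\equiv c$, and using $|\na u_0|(y) = |y|^{-1}|\sna w|(y/|y|)$, the $p$-energy factors as
\[
\int_{\ball^+}|\na u_0|^p\,dx \,=\, \Bigl(\int_0^1 r^{m-1-p}\,dr\Bigr) \int_{\stwo_+}|\sna w|^p\,d\omega.
\]
In the case $p \geq m$, the radial factor diverges unless $|\sna w|\equiv 0$, forcing $u_0\equiv c$ directly from $u_0\in W^{1,p}$. So assume $p < m$.

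Suppose for contradiction that $w\not\equiv c$. The strategy is to construct a non-0-homogeneous admissible competitor $v_\epsilon$ with strictly smaller $p$-energy than $u_0$. For small $\epsilon>0$, introduce a Lipschitz ``flattening'' map $\Phi_\epsilon:\ball^+\to\ball^+$ which is the identity outside $\ball^+_{2\epsilon}$, preserves the flat face $\{x^m=0\}\cap\ball^+$, and maps (most of) the core $\ball^+_\epsilon$ into the flat face; concretely one may take $\Phi_\epsilon(x',x^m)=(x',\psi_\epsilon(|x|)\,x^m)$ for a suitable cutoff $\psi_\epsilon$. Define $v_\epsilon := u_0\circ\Phi_\epsilon$; it matches the boundary values of $u_0$ on $\p\ball^+$, equals $c$ on $\ball^+_\epsilon$ (since $u_0\equiv c$ on the flat face), and coincides with $u_0$ on $\ball^+\setminus\ball^+_{2\epsilon}$.

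The energy saved on the core is
\[
\int_{\ball^+_\epsilon} |\na u_0|^p\,dx \,=\, \frac{\epsilon^{m-p}}{m-p}\int_{\stwo_+}|\sna w|^p\,d\omega,
\]
while the transition-shell energy on $\ball^+_{2\epsilon}\setminus\ball^+_\epsilon$, estimated via the chain rule and the scaling $|\na u_0|\sim|\sna w|/|y|$, is bounded by a quantity of the form
\[
C\,\epsilon^{m-p}\Bigl(\int_{\stwo_+}|\sna w|^p\,d\omega+\int_{\stwo_+}|w-c|^p\,d\omega\Bigr),
\]
the second term capturing the angular transition from $c$ to $w$ across the shell. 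The hemispheric Poincar\'e inequality $\int_{\stwo_+}|w-c|^p\,d\omega\leq C_P\int_{\stwo_+}|\sna w|^p\,d\omega$ (valid since $w|_{\p\stwo_+}\equiv c$) converts this bound to $C'\epsilon^{m-p}\int_{\stwo_+}|\sna w|^p\,d\omega$. By choosing $\psi_\epsilon$ to optimise the interpolation profile and letting $\epsilon \to 0$, one secures $E_p[v_\epsilon] < E_p[u_0]$ unless $\int_{\stwo_+}|\sna w|^p=0$, i.e., $w\equiv c$, yielding the desired contradiction.

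The main obstacle is the quantitative control of the transition shell: near the $e_m$-axis, the naive flattening sends points towards the origin where the homogeneous $u_0$ is singular, producing a divergent contribution. The remedy in Hardt--Lin \cite{hl2} is a multilayer interpolation that avoids the axial degeneracy, combined with a projection onto $N$ via its tubular-neighborhood retraction (valid since $N$ is smooth Riemannian), which pushes the construction through for all $1 < p < \infty$.
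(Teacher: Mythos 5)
Your reduction to the hemisphere map $w$ and your treatment of the range $p\geq m$ are fine (there the radial factor $\int_0^1 r^{m-1-p}\,\dd r$ diverges, so $u_0\in W^{1,p}$ already forces $|\sna w|\equiv 0$, without using minimality). The genuine content of the lemma is the range $1<p<m$, and there your argument has two concrete gaps. First, the transition-shell estimate $C\,\epsilon^{m-p}\bigl(\int_{\mathbb{S}^{m-1}_+}|\sna w|^p+\int_{\mathbb{S}^{m-1}_+}|w-c|^p\bigr)$ is not justified, and for the specific flattening $\Phi_\epsilon(x',x^m)=(x',\psi_\epsilon(|x|)x^m)$ it fails near the $e_m$-axis: where $\psi_\epsilon$ is small one has $|\Phi_\epsilon(x)|\approx\max\{|x'|,\psi_\epsilon(|x|)x^m\}\ll|x|$, and since $|\na u_0|(y)=|y|^{-1}|\sna w|(y/|y|)$ the chain rule produces a factor of order $|x'|^{-1}$ on a fixed portion of the shell; its $p$-th power is not even integrable across the axis once $p\geq m-1$, and in any case is not controlled by the stated right-hand side (note also that $w$ is only $W^{1,p}$, so you cannot absorb this by $\sup|\sna w|$). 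Moreover, since $v_\epsilon=u_0\circ\Phi_\epsilon$ takes only values of $u_0$, it is unclear where the term $\int|w-c|^p$ (an interpolation of \emph{values}) should come from; if you do interpolate values you need the nearest-point projection onto $N$ together with a smallness of $|w-c|$ that you do not have. You acknowledge exactly this axial difficulty and resolve it by appealing to ``the multilayer interpolation of Hardt--Lin \cite{hl2}''; that is circular, because the statement you are asked to prove \emph{is} Theorem 5.7 of \cite{hl2} — the paper under review quotes it without proof — so the missing construction is precisely the content to be supplied.

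Second, even granting a shell bound of the form $C'\epsilon^{m-p}\int_{\mathbb{S}^{m-1}_+}|\sna w|^p$, the comparison does not close as you state it. By exact $0$-homogeneity the construction is scale-covariant: with $\psi_\epsilon(r)=\psi_1(r/\epsilon)$ the shell cost equals $\epsilon^{m-p}S_1$ while the energy saved on $\ball^+_{2\epsilon}$ equals $\frac{(2\epsilon)^{m-p}}{m-p}\int_{\mathbb{S}^{m-1}_+}|\sna w|^p$; both sides scale identically in $\epsilon$, so ``letting $\epsilon\to 0$'' gains nothing, and the strict inequality $E_p[v_\epsilon]<E_p[u_0]$ requires the dimensionless constant of your construction to beat $\frac{2^{m-p}}{m-p}$ — a quantitative fact that optimising the cutoff profile is never shown to deliver. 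Beating the constant of the homogeneous extension by a competitor exploiting the constant values on the flat face is the whole difficulty of the lemma, so as written the proposal reduces the theorem to itself rather than proving it.
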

%For our purpose, we shall take $p=2, m=3$. In particular, we remark that the proof of Lemma \ref{lem: boundary reg} below is not valid for $p<2$. 

We also recall the monotonicity formula: let $u$ be an energy-minimiser and $0<\sigma<\rho<\rho_0$ such that $\ball(y,\rho_0) \Subset \ball$. Then
\begin{equation}\label{monotonicity formula}
\frac{1}{\rho}\int_{\ball(y,\rho)} |\na u|^2\,\dd x - \frac{1}{\sigma}\int_{\ball(y,\sigma)} |\na u|^2\,\dd x =  \int_{\ball(y,\rho)\sim \ball(y,\sigma)} \frac{2}{r}\left|\frac{\p u}{\p r}\right|^2\,\dd x \geq 0.
\end{equation}
The proof follows by considering  ``squeeze deformations'' of $u$; {\it cf.} Lemma 2.5, \cite{su1}; Lemma 1.3, \cite{su2} and $\S 2.4$, \cite{s2}, among others.

\begin{proof}[Proof of Lemma \ref{lem: boundary reg}]
	By a standard blowup argument --- {\it cf}. $\S 5$ in  Hardt--Kinderlehrer--Lin \cite{hkl} --- it suffices to prove a uniform bound on the rescaled energy: for $\rho_0$ sufficiently small, there exists $c_0>0$ such that 
		\begin{equation}\label{normalised energy bound}
	\frac{1}{\rho_0} \int_{\ball_{2\rho_0} \cap \Omega_g} |\na u|^2\,\dd x \leq c_0.
	\end{equation}	
(One may conclude by choosing $c_0$ depending on $e_0$, and then shrinking $\rho_0$ if necessary.)

As in \cite{hkl}, \eqref{normalised energy bound} will follow from an absolute bound
\begin{equation}\label{abs bound}
\int_{\ball_{1/2} \cap \Omega_g} |\na u|^2\,\dd x \leq c_1,
\end{equation}
where $c_1$ depends only on $p$ and $\ell_0$. In particular, the arguments for ``energy decay/improvement'' in $\S \S 5.4, 5.5$, \cite{hkl} can be directly adapted to the case of $W^{1,p}$-boundary data. In the sequel let us exhibit a $c_1$.

For {\it a.e.} $\sigma\in [1/2,1]$, choose a bi-Lipschitz map $\Phi_\sigma: \ball_\sigma \cap \Omega_g \map \ball_\sigma$. The bi-Lipschitz constant of $\Phi_\sigma$ is universal; let us call it $\Lambda$. It depends only on $\|g\|_{W^{1,\infty}} \leq \ell_0$. We {\em claim} that there is $\omega_\sigma$, an extension of $(u \circ \Phi_\sigma^{-1})|\p\ball_\sigma$, that satisfies the following inequality:
\begin{equation}\label{caccioppoli}
\int_{\ball_\sigma} |\na \omega_\sigma|^2\,\dd x \leq c_2 \left\{ \int_{\p\ball_\sigma} \left|\sna (u \circ \Phi_\sigma^{-1})\right|^2\,\dd A \right\}^{1/2}
\end{equation}
for {\it a.e.} $\sigma \in [1/2,1]$. 

To see this, we follow the arguments in $\S 2.3$, \cite{hkl}. Let $\lambda=\lambda(\sigma)$ be the vector $|\ball_{\sigma}|^{-1}\int_{\ball_\sigma} (u \circ \Phi_\sigma^{-1})\,\dd x$ in $\R^3$. By Fubini's theorem, for {\em a.e.} $\sigma' \in [\sigma/2,\sigma]$  we have
\begin{eqnarray}
&&\int_{\p\ball_{\sigma'}} \left|\sna \left(u \circ \Phi_\sigma^{-1}\right)\right|^2\,\dd A \leq 8 \int_{\ball_\sigma}  \left|\na \left(u \circ \Phi_\sigma^{-1}\right)\right|^2\,\dd x,\label{cac1}\\
&&\int_{\p\ball_{\sigma'}} \left| \left(u \circ \Phi_\sigma^{-1}\right) - \lambda\right|^2\,\dd A \leq 8 \int_{\ball_\sigma} \left| \left(u \circ \Phi_\sigma^{-1}\right)- \lambda\right|^2\,\dd x.\label{cac2}
\end{eqnarray}
The right-hand sides of \eqref{cac1} and \eqref{cac2} are finite, thanks to 
\begin{equation*}
\int_{\ball_\sigma} \left|\na \left(u \circ \Phi_\sigma^{-1}\right)\right|^2\,\dd x \leq \Lambda^2 \int_{\Omega_g \cap \ball_\sigma} \left|\na u\right|^2\,\dd x
\end{equation*}
and the Poincar\'{e} inequality.

Let $h: \ball_{\sigma'} \map \R^3$ be the harmonic {\em function} --- {\it i.e.}, $\Delta h =0$ --- with $h|{\p\ball_{\sigma'}}= (u \circ \Phi_\sigma^{-1})|{\p\ball_{\sigma'}}$. By an elementary computation, all harmonic  functions fulfil  the identity
\begin{equation}\label{harm map identity}
\sigma' \int_{\p \ball_{\sigma'}} \left|\sna h\right|^2\,\dd A = \int_{\ball_{\sigma'}} |\na h|^2\,\dd x + \sigma' \int_{\p\ball_{\sigma'}} \left| \frac{\p h}{\p r}\right|^2\,\dd A.
\end{equation}
Thus, using integration by parts, the Cauchy--Schwarz inequality, \eqref{harm map identity},  and that $\sna h = \sna (u\circ\Phi^{-1}_\sigma)$ on $\p\ball_{\sigma'}$,  we deduce
\begin{align}\label{ineq for h}
\int_{\ball_{\sigma'}} |\na  h|^2 \,\dd x &= \int_{\p \ball_{\sigma'}} (h-\lambda) \cdot \frac{\p (h-\lambda)}{\p r} \,\dd A \nonumber \\
&\leq \left\{\int_{\p\ball_{\sigma'}} \left|\left(u \circ \Phi_\sigma^{-1}\right) - \lambda\right|^2\,\dd A\right\}^{1/2}  \left\{\int_{\p\ball_{\sigma'}} \left|\sna \left(u \circ \Phi^{-1}_\sigma\right)\right|^2\,\dd A\right\}^{1/2}.
\end{align}

Now let us modify $h$ to a function with range in $\stwo$ satisfying the same bound as in \eqref{ineq for h}. Denote by $\Pi_a: \R^3 \map \stwo$ the projection
\begin{equation*}
\Pi_a (x) := \frac{x-a}{|x - a|}.
\end{equation*} 
By Sard's theorem, $\Pi_a \circ h \in W^{1,2}(\ball_{\sigma'}, \stwo)$ for almost every $a \in \ball_{\sigma'/2}$. We have
\begin{align*}
|\na (\Pi_a \circ h )| = \Bigg| \frac{\na h}{|h-a|} - \frac{\na h \cdot (h-a) \otimes (h-a)}{|h-a|^3} \Bigg| \leq 2 \frac{|\na h|}{|h-a|}.
\end{align*}
Thus
\begin{align*}
\int_{\ball_{\sigma'/2}}\int_{\ball_{\sigma'}} \big|\na \big(\Pi_a \circ h(x)\big)\big|^2 \,\dd x \,\dd a &\leq 4\int_{\ball_{\sigma'}} |\na h(x)|^2 \left\{\int_{\ball_{\sigma'/2}} |h(x)-a|^{-2}\,\dd a\right\}\,\dd x\\
&\leq 4\pi  \int_{\ball_{\sigma'}} |\na h(x)|^2\dd x.
\end{align*}
In particular, by Fubini we can choose $a \in \ball_{\sigma'/2}$ such that 
\begin{equation*}
\int_{\ball_{\sigma'}} \big|\na \big(\Pi_a \circ h(x)\big)\big|^2 \,\dd x \leq 8\pi \int_{\ball_{\sigma'}} |\na h(x)|^2\dd x.
\end{equation*}
One thus deduces from \eqref{ineq for h} that
\begin{equation*}
\int_{\ball_{\sigma'}} \big|\na \big(\Pi_a \circ h(x)\big)\big|^2 \,\dd x \leq  8\pi \left\{\int_{\p\ball_{\sigma'}} \left|\left(u \circ \Phi_\sigma^{-1}\right) - \lambda\right|^2\,\dd A\right\}^{1/2}  \left\{\int_{\p\ball_{\sigma'}} \left|\sna \left(u \circ \Phi^{-1}_\sigma\right)\right|^2\,\dd A\right\}^{1/2}.
\end{equation*}
But $u$ takes values in $\stwo$; so
\begin{align*}
&\int_{\p\ball_{\sigma'}} \left|\left(u \circ \Phi_\sigma^{-1}\right) - \lambda\right|^2\,\dd A\\
&\quad \leq 2  \int_{\p\ball_{\sigma'}} \left|\left(u \circ \Phi_\sigma^{-1}\right)\right|^2\,\dd A + 2 \int_{\p\ball_{\sigma'}} \lambda^2\,\dd A \leq 4|\p\ball_{\sigma'}| \leq 16\pi;
\end{align*} 
hence 
\begin{equation}
\int_{\ball_{\sigma'}} \big|\na \big(\Pi_a \circ h(x)\big)\big|^2 \,\dd x \leq 32\pi^{3/2} \left\{\int_{\p\ball_{\sigma'}} \left|\sna \left(u \circ \Phi^{-1}_\sigma\right)\right|^2\,\dd A\right\}^{1/2}.
\end{equation}

Finally, set 
\begin{equation}
w_\sigma := (\Pi_a|\p\ball_{\sigma'})^{-1} \circ \Pi_a \circ h.
\end{equation}
By construction $w_\sigma|\p\ball_{\sigma'} = (u \circ \Phi_\sigma^{-1})|{\p\ball_{\sigma'}}$. The Lipschitz norm of $(\Pi_a|\p\ball_{\sigma'})^{-1}$  can be bounded geometrically as follows. For $a \in \ball_{\sigma'/2}$ given, set up the polar coordinate centred at $a$. Then $\|(\Pi_a|\p\ball_{\sigma'})^{-1}\|_{\rm Lip}$ equals the maximal ratio $\ell_{a,\sigma'}\slash \theta_a$, where $\theta_a$ is the angle between two straight lines emanating from $a$, and $\ell_{a,\sigma'}$ is the length of the arc $\mathscr{A}$ on $\p\ball_{\sigma'}$ swept out by such straight lines opening at angle $\theta_a$. By elementary  Euclidean geometry, the supremum over $a \in \ball_{\sigma'/2}$ of $\ell_{a,\sigma'}\slash \theta_a$ is  attained only if $a \in \p \ball_{\sigma'/2}$ and $\theta_a$ is bisected by the straight line through $a$ and $0$. In this case, $\ell_{a,\sigma'}\slash \theta_a = \sigma' \alpha \slash \theta_a$, where $\alpha$ is the angle formed by arc $\mathscr{A}$ and the origin. Clearly $\sigma' \alpha \slash \theta_a \leq 2 \sigma' \leq 2$; hence $$\left\|\left(\Pi_a|\p\ball_{\sigma'}\right)^{-1}\right\|_{\rm Lip} \leq 2.$$ We can thus conclude \eqref{caccioppoli} by choosing $c_2=128\pi^{3/2}$ (replacing $\sigma'$ with $\sigma$). %It is worth noting that the above bounds are uniform in $a$ and $\sigma'$.

Now, define
\begin{equation}
\D (\sigma) := \int_{\ball_\sigma \cap \Omega_g} |\na u|^2\,\dd x.
\end{equation}
By the minimality of $u$, we have
\begin{align*}
\D (\sigma) &\leq \int_{\ball_\sigma \cap \Omega_g} |\na (\omega_\sigma \circ \Phi_\sigma)|^2\,\dd x  \\
&\leq  \|\na \Phi_\sigma\|_{L^\infty}^2 \int_{\ball_\sigma} |\na \omega_\sigma|^2\,\dd x\\
&\leq c_2 \|\na \Phi_\sigma\|_{L^\infty}^2 \left\{ \int_{\p\ball_\sigma} \left|\sna \left(u \circ \Phi_\sigma^{-1}\right)\right|^2\,\dd A \right\}^{1/2}\\
&\leq c_2  \|\na \Phi_\sigma\|_{L^\infty}^2\left\|\na \Phi_\sigma^{-1}\right\|_{L^\infty} \left\{ \int_{\p\ball_\sigma \cap \Omega_g} \left|\sna u\right|^2\,\dd A + \int_{\ball_\sigma \cap \p\Omega_g} \left|\sna u\right|^2\,\dd A  \right\}^{1/2}.
\end{align*}
H\"{o}lder's inequality and the assumptions on $\|u|\ball\cap \p\Omega_g\|_{W^{1,p}}$ and $g$ give us
\begin{align}\label{qqq}
\int_{\ball_\sigma \cap \p\Omega_g} \left|\sna u\right|^2\,\dd A &\leq \left\{\int_{\p\Omega_g} \left|\sna u\right|^p\,\dd A\right\}^{\frac{2}{p}} |\ball_\sigma \cap \p\Omega_g |^{\frac{p-2}{p}} \nonumber\\
&\leq 1\times \left\{\int_{\{z\in\R^2: |z| \leq \sigma\}\cap\Omega_g} \sqrt{1+|\na g|^2}\,\dd z\right\}^{\frac{p-2}{p}} \leq \left(\sqrt{2}\pi \sigma^2\right)^{\frac{p-2}{p}}.
\end{align}
Thus, for {\it a.e.} $\sigma \in [1/2,1]$, with the previously chosen value of $c_2$ we have
\begin{equation}\label{ineq for D}
\D (\sigma) \leq 128 \pi^{3/2} \Lambda^3 \Big( {\D'(\sigma)} + (\sqrt{2}\pi \sigma^2)^{\frac{p-2}{p}}\Big)^{1/2}.
\end{equation}

To prove \eqref{abs bound}, it is enough to establish $\D(1/2) \leq c_1$. Let us write $c_1=\theta^{-1}$ and assume for contradiction that $\D(1/2) > \theta^{-1}$ for each $\theta>0$. Then
\begin{align*}
\int_{1/2}^1 \frac{-\D'(\sigma)}{\D^2(\sigma)}\,\dd \sigma = \frac{1}{\D(1)} - \frac{1}{\D(1/2)} > -\theta.
\end{align*}
On the other hand, by \eqref{ineq for D} there holds
\begin{align*}
\frac{\D'(\sigma)}{\D^2(\sigma)} \geq \left( \frac{1}{128 \pi^{3/2} \Lambda^3}\right)^2 - \frac{\left(\sqrt{2}\pi\right)^{\frac{p-2}{p}}}{\D(\sigma)^2} \geq \left( \frac{1}{128 \pi^{3/2}\Lambda^3}\right)^2 - \left(\sqrt{2}\pi\right)^{\frac{p-2}{p}}\theta^2.
\end{align*}
Integrating $\sigma$ over $[1/2,1]$, we get
\begin{align*}
\wp(\theta):=\left(\sqrt{2}\pi\right)^{\frac{p-2}{p}} \theta^2 + 2\theta - \frac{1}{16384 \pi^3 \Lambda^6} > 0.
\end{align*}
However, $\wp$ has a positive root $\theta_0>0$, so any $\theta \in ]0,\theta_0[$ would violate the above inequality. To be concrete, we can take $\theta = \theta_0/2$, {\it i.e.},
\begin{equation*}
c_1 ={2^{1+\frac{p-2}{2p}}\pi^{\frac{p-2}{p}}}
\left({\sqrt{1+ \frac{\left(\sqrt{2}\pi\right)^{\frac{p-2}{p}}}{16384\pi^3\Lambda^6}} -1}\right)^{-1},
\end{equation*}
where $\Lambda$ is the supremum of the bi-Lipschitz constant of $\Phi_\sigma$ over $\sigma \in [1/2,1]$. This gives the desired contradiction and thus concludes \eqref{abs bound}.

Finally, let us establish the bound \eqref{normalised energy bound}. If it were false, for some $c>0$ there would exist sequences of positive numbers $\{\rho_i\}\searrow 0$, $\{e_i\}\searrow 0$, and $\{\ell_i\} \searrow 0$, Lipschitz maps $\{g_i\}$ with $\|g_i\|_{W^{1,\infty}} \leq \ell_i$, and minimisers $\{u_i\} \subset W^{1,2}(\Omega_{g_i}, \stwo)$, such that \begin{equation}\label{contradiction}
\big\| u_i |{\ball \cap \p\Omega_{g_i}}\big\|_{W^{1,p}} \leq e_i\qquad \text{ but } \qquad\liminf_{i \map \infty}\frac{1}{\rho_i} \int_{\ball_{2\rho_i \cap \Omega_{g_i}}} |\na u_i|^2\,\dd x \geq c.
\end{equation}
Denote by 
\begin{equation*}
\tilde{u}_i(x) := u_i(2\rho_i x), \qquad \tilde{g}_i(x) := g_i(2\rho_i x).
\end{equation*}
Then $\|\tilde{g}_i\|_{W^{1,\infty}} \leq 2\rho_i\ell_i$ and%, by a change of variable in \eqref{abs bound}, one has
\begin{equation*}
\frac{1}{2\rho_i} \int_{\ball_{\rho_i} \cap \Omega_{g_i}} |\na u_i|^2\,\dd x = \int_{\ball_{1/2} \cap \Omega_{\tilde{g}_i}} |\na \tilde{u}_i|^2\,\dd x \leq c_1,
\end{equation*}
where $c_1$ is as in \eqref{abs bound}. As a result, a subsequence of $\{\tilde{u}_i\}$ converges weakly to $v \in W^{1,2}(\ball^+,\stwo)$. By monotonicity identity  \eqref{monotonicity formula}, $v$ is degree-$0$-homogeneous. Thanks to Theorem $6.4$ in  Hardt--Lin \cite{hl2}, the convergence $\tilde{u}_i \map v$ is indeed strong in the $W^{1,2}$-topology, and $v$ is a minimising harmonic map. But  the first inequality in \eqref{contradiction} implies that the limiting map $v \in W^{1,2}(\ball^+,\stwo)$ is constant on $\ball\cap\{x_3 = 0\}$, up to the choice of a representative in the Sobolev class. In view of Lemma \ref{lem: hardt-lin}, this contradicts the second inequality in \eqref{contradiction}. 

Hence the assertion follows.  \end{proof}

In the proof above, \eqref{qqq} and Lemma \ref{lem: hardt-lin} require $p\geq 2$. In fact, in view of the later parts of the paper and \cite{al, ms}, Lemma \ref{lem: boundary reg} is invalid for any $p<2$. % In this paper we shall only apply Lemma \ref{lem: boundary reg} under Assumption $(2)$.

\section{The Model Case: Stability of Hedgehog on $\Omega = \ball$}\label{sec: ball}

In this section we prove Theorem \ref{thm: W1,p, p>2} for the model case $\Omega=\ball$, $\varphi=\id_{\stwo}$ with $p>2$. The general case shall be  obtained by glueing these building blocks together in \S \ref{sec: general}, with modifications for the critical case $p=2$. Recall from the hypotheses of Theorem \ref{thm: W1,p, p>2} that the boundary map $\psi$ has $C^{1,\alpha}$-regularity; see \cite{mms} for results on $\psi$ with lower regularity.

We shall crucially rely on the result below due to L. Simon (see Theorem 1, \cite{s1} and the exposition \cite{s2}). %A useful, alternative version will be presented in \eqref{new 2}.
\begin{proposition}\label{propn: simon}
Let $\Omega$ be an open subset of $\R^n$. Let $u \in W^{1,2}(\Omega,\stwo)$ be an energy-minimising map. Assume that $\Theta(x/|x|)$ is a tangent map of $u$ at $0$, where $\Theta \in \mathcal{O}(3)$ is a rotation. Then such $\Theta$ is unique. Moreover, there are uniform constants $\beta_0 \in ]0,1]$ and $c>0$ depending only on $\Omega$ such that for all $r>0$ sufficiently small, we have
\begin{align*}
\mathscr{A}(r):=\left\|\frac{\p}{\p r} u(r\bullet)\right\|_{C^1(\stwo)} + \|u(r,\bullet)-\Theta\|_{C^2(\stwo)} \leq cr^{\beta_0} \left\|u-\frac{x}{|x|}\right\|_{C^2(\ball_{2/3}\sim\ball_{1/3})}.
\end{align*}

\end{proposition}

Let us recall the notion of {\em tangent maps} (see $\S 3.1$, \cite{s2} for details). In the setting of Proposition \ref{propn: simon}, take $\ball(y, \rho_0) \Subset \Omega$, and for any $\rho \in ]0,\rho_0]$ define the blowup maps $$u_{y,\rho}(x):=u(y+\rho x).$$ Then, by the monotonicity formula \eqref{monotonicity formula}, there holds $\int_{\ball}|\na u_{y,\rho}|^2\,\dd x \leq \rho_0^{-1}\int_{\ball(y,\rho_0)} |\na u|^2\,\dd x$. By \cite{su1, hl2}, for any $\{\rho_j\}\searrow 0$ we can select a subsequence (not relabelled) $\{u_{y,\rho_j}\}$ that converges strongly in $W^{1,2}_{\rm loc}$ on $\R^n$ to an energy-minimiser $u_0$. Any $u_0$ thus obtained is called a tangent map of $u$ at $y$. The uniqueness of tangent maps remains a major open problem in the large. %Few results have been obtained if the target manifold $N$ is $C^\infty$ instead of real-analytic, or if the singularities of the tangent maps are not known to be unique {\it a priori}.

Proposition~\ref{propn: simon} is a consequence of Simon's asymptotic theory of nonlinear evolution equations developed in \cite{s1}. Indeed, Theorem~1 and Section~8 therein show that $\mathscr{A}(r)\to 0$ as $r \searrow 0$, provided that the target manifold $N$ is real-analytic. When specialising to $N=\stwo$, it follows from Brezis--Coron--Lieb \cite{bcl} that the tangent map in the proposition must be of the form $\Theta(x/|x|)$ for a rotation $\Theta$. In this case, the integrability of Jacobi fields (see Simon, \S 6 in \cite{s2} and Gulliver--White \cite{gw}) yields the desired decay estimate of $\mathscr{A}(r)$. Similar arguments were used in the proof of convergence to tangent cones of minimal submanifolds by Almgren--Allard in \cite{aa}.

\subsection{Singularity is Unique}\label{subsec: unique sing} Take $\Omega = \ball$ and  $\varphi=\id_{\stwo}$. Then $v:\ball\map\stwo$, the unique minimising map with $v|\p\Omega=\id_{\stwo}$, is the ``hedgehog'' $$v(x)=\frac{x}{|x|}$$
(see Brezis--Coron--Lieb \cite{bcl}). Assume for contradiction that a sequence $\{u_i\}\subset W^{1,2}(\ball,\stwo)$ is energy-minimising with boundary data $\psi_i := u_i | \p\ball \in C^{1,\alpha}(\p\ball,\stwo)$, so that $$\delta_i := \|\psi_i -\id_\stwo\|_{W^{1,p}} \longrightarrow 0$$ but $u_i$ has more than one singularity for large enough $i$. 

%Consider a boundary map $\psi \in C^{1,\alpha}(\p\ball=\stwo,\stwo)$ with $\|\psi - \id_{\stwo}\|_{W^{1,p}} \leq \delta$. Let $u\in W^{1,2}(\Omega,\stwo)$ be a minimiser attaining $\psi$ on the boundary. 

First, by the minimality of $u_i$, we get 
\begin{align*}
\int_\ball |\na u_i|^2 \,\dd x&\leq \int_\ball \left|\na\left\{\psi_i \left(\frac{x}{|x|}\right)\right\}\right|^2\,\dd x\\
&\leq \int_{\stwo} |\sna \psi_i(x)|^2\,\dd A\\
&\leq (1+\kappa) \int_\stwo \left|\sna \id_\stwo\right|^2\,\dd A + \Big(1+\frac{1}{\kappa}\Big) \int_\stwo |\sna (\psi_i - \id_\stwo)|^2\,\dd A
\end{align*}
for any small $\kappa>0$. In the last line we used the simple inequality $(a+b)^2 \leq (1+\kappa)a^2+(1+\kappa^{-1})b^2$. Moreover, it is well-known that $x/|x|$ has the quantised energy $8\pi$:
\begin{equation*}
\int_\stwo |\sna \id_\stwo|^2\,\dd A = \int_\ball \Big|\na\Big(\frac{x}{|x|}\Big)\Big|^2\,\dd x = 8\pi.
\end{equation*}
In addition,
\begin{align*}
\int_\stwo |\sna (\psi_i - \id_\stwo)|^2\,\dd A &\leq \|\psi_i-\id_\stwo\|^2_{{W}^{1,p}} \|\1_\stwo\|_{L^{\frac{p}{p-2}}} = (4\pi)^{\frac{p-2}{p}}  \|\psi_i-\id_\stwo\|^2_{{W}^{1,p}}.
\end{align*}
Thus
\begin{align}\label{comparison 1}
\int_\ball |\na u_i|^2 \,\dd x \leq (1+\kappa) 8\pi + \Big(1+\frac{1}{\kappa}\Big){(4\pi)^{\frac{p-2}{p}} } (\delta_i)^2.
\end{align}

Thanks to the $W^{1,2}$-bound in \eqref{comparison 1}, $\{u_i\}$ has a subsequence (not relabelled) that converges weakly in $W^{1,2}$. By sending first $i \nearrow \infty$ and then $\kappa \searrow 0$, any such limit function has energy $\leq 8\pi$ and boundary map $\id_\stwo$. Again by Brezis--Coron--Lieb \cite{bcl}, it must be $x/|x|$. Using the arguments by Schoen--Uhlenbeck (\cite{su1}, also see L. Simon \cite{s2} via Luckhaus' lemma \cite{l}), we have
\begin{equation}\label{strong convergence}
u_i(x) \longrightarrow \frac{x}{|x|}\qquad \text{ strongly in } W^{1,2}.
\end{equation}

Now, in view of Lemma \ref{lem: boundary reg}, there exists a universal $\rho_0>0$ such that $u_i$ are uniformly H\"{o}lder continuous with uniformly bounded energy on some neighbourhood of $\p(\ball \sim \ball_{1-\rho_0})$. By the definition of $\delta_i$, $\deg(\psi_i)$ is equal to $1$ for sufficiently large $i$. So the singular set $sing\,u_i$ is non-empty and lies in $\ball_{1-\rho_0}$, {\it i.e.}, away from the boundary $\p\ball$. As $x/|x|$ is H\"{o}lder continuous away from $0$, thanks to \eqref{strong convergence} and the interior regularity result in \cite{su1}, we may conclude that the diameter of  $sing\, u_i$ tends to zero.

For any $r \in ]0,1/20[$, there is $i$ large enough such that $\ball_{1-|a_i|} \subset \ball_{1-\rho_0}$, $|a_i| < r/4$ for every $a_i \in sing\, u_i$. Consider $\bar{u}_i (x):=u_i(x+a_i)$ defined on $\ball_{1-|a_i|}$. Then we have
\begin{align*}
&\left\| \bar{u}_i - \frac{x}{|x|}\right\|_{C^2(\ball(a_i,1/2) \sim \ball_{r/2})} \\
&\quad \leq \left\| u_i - \frac{x}{|x|}\right\|_{C^2(\ball_{1-2\rho_0} \sim \ball_{1/10})} + \left\| \frac{x-a_i}{|x-a_i|} - \frac{x}{|x|}\right\|_{C^2(\ball_{1-3\rho_0} \sim  \ball_{1/5})} \longrightarrow 0.
\end{align*}
The convergence of the first term follows from the interior regularity theory (see Schoen--Uhlenbeck \cite{su1}), and the convergence of the second term can be deduced from direct computation. Using the asymptotic theory of Simon (Proposition~\ref{propn: simon}), we have $sing\,\bar{u}_i = \{0\}$ for sufficiently large $i$. This contradicts the assumption that $u_i$ has more than one singularities.

Therefore, there exists $\delta>0$ such that for any $\psi \in C^{1,\alpha}(\p\ball,\stwo)$ with $\|\psi - \id_\stwo\|_{W^{1,p}} \leq \delta$, any minimiser $u$ with $u|\p\ball=\psi$ has a unique singular point.

In the sequel we say $sing\, u = \{a\}$.

\subsection{Modulus of  Singularity}\label{subsec: modulus of sing} 
To estimate the modulus $|a|$, we pick some $\rho \in ]0,1[$ and define %consider the comparison map $w$ obtained from a ``squeeze deformation'' of $u$: 
\begin{equation}
w(x):= \begin{cases}
u(\rho^{-1}x),\qquad 0\leq |x|<\rho,\\
{z(x)}/{|z(x)|},\qquad \rho \leq |x| \leq 1,
\end{cases}
\end{equation}
 where
 \begin{equation*}
 z(x) := \frac{1}{1-\rho} \left\{ (1-|x|) \psi\left(\frac{x}{|x|}\right) + (|x|-\rho)\frac{x}{|x|} \right\}.
 \end{equation*}
 
In $\ball_\rho$ there holds
\begin{align*}
\int_{\ball_\rho} |\na w(x)|^2\,\dd x= \rho\int_\ball |\na u(y)|^2\,\dd y.
\end{align*}
For $x \in \ball \sim\ball_\rho$, we shall estimate by
 \begin{align*}
 \int_{\ball\sim\ball_\rho} |\na w|^2\,\dd x = \int_{\ball\sim\ball_\rho} \left\{ \frac{|z|^2|\na z|^2 - |z\cdot \na z|^2}{|z|^4}  \right\}\,\dd x \leq \int_{\ball\sim\ball_\rho} \frac{|\na z|^2}{|z|^2}\,\dd x.
\end{align*}
Notice that
 \begin{equation*}
 z(x)-\xx = \frac{1-|x|}{1-\rho} (\psi-\id_\stwo)\Big(\xx\Big);
 \end{equation*}
so for $\rho \leq |x| \leq 1$ one has
\begin{equation}\label{sobolev}
|z(x)| \geq 1-\left| \frac{1-|x|}{1-\rho} \right|\|\psi-\id_\stwo\|_{L^\infty} \geq 1-c_5\delta,
\end{equation}
where $c_5=c(p)$ is the Sobolev constant for $W^{1,p}(\p\ball,\stwo) \emb C^0(\p\ball,\stwo)$ for $p>2$. Hence
 \begin{align*}
 \int_{\ball\sim\ball_\rho} |\na w|^2\,\dd x \leq \int_{\ball\sim\ball_\rho} \frac{|\na z(x)|^2}{(1-c_5\delta)^2}\,\dd x.
 \end{align*}
But \begin{equation*}
 \na z(x) - \na \left(\xx\right) = \frac{1}{1-\rho}\left\{ \xx \otimes (\id_\stwo -\psi) \left(\xx\right) + (1-|x|) \left[ \na \psi\left(\xx\right) - \na \left(\xx\right) \right] \right\};
 \end{equation*}
so, computing in spherical polar coordinates using $(a+b)^2 \leq (1+\kappa)a^2+(1+\kappa^{-1})b^2$ and H\"{o}lder's inequality, we get
\begin{align*}
 \int_{\ball\sim\ball_\rho} |\na z|^2\,\dd x &\leq (1+\kappa) 
 \int_{\ball\sim\ball_\rho} \Big|\na \Big(\xx\Big)\Big|^2\,\dd x + (1+\kappa^{-1}) 
 \int_{\ball\sim\ball_\rho} \bigg\{\frac{1}{1-\rho}\Big| \xx - \psi\Big(\xx\Big)\Big|\bigg\}^2\,\dd x \\
 &\quad  + (1+\kappa^{-1}) 
 \int_{\ball\sim\ball_\rho} \bigg\{\frac{1-|x|}{1-\rho}\bigg| \na\Big(\psi\big(\xx\big)\Big) - \na \Big(\xx\Big) \bigg|\bigg\}^2\,\dd x\\
 & \leq  (1+\kappa) 
 \int_{\ball\sim\ball_\rho} \Big|\na \Big(\xx\Big)\Big|^2\,\dd x  + \frac{1+\kappa^{-1}}{(1-\rho)^2} \Bigg\| \Big|\xx-\psi\Big(\xx\Big) \Big|^2 \Bigg\|_{L^{p/2}(\ball\sim\ball_\rho)}
|\ball\sim\ball_\rho|^{\frac{p-2}{p}} \\
 &\quad + \frac{1+\kappa^{-1}}{(1-\rho)^2} \Bigg\| \bigg| \na\Big(\psi\big(\xx\big)\Big) - \na \Big(\xx\Big) \bigg|^2\Bigg\|_{L^{p/2}(\ball\sim\ball_\rho)} \big\|(1-|x|)^2\big\|_{L^{\frac{p}{p-2}}(\ball\sim\ball_\rho)}\\
&\leq (1+\kappa) 
 \int_{\ball\sim\ball_\rho} \Big|\na \Big(\xx\Big)\Big|^2\,\dd x + \frac{(1+\kappa^{-1})(1-\rho^3)}{3(1-\rho)^2}(4\pi)^{\frac{p-2}{p}} \delta^2.
\end{align*}
%Computing in the spherical polar coordinates, one obtains
%\begin{equation*}\Bigg\| \Big|\xx-\psi\Big(\xx\Big) \Big|^2 \Bigg\|_{L^{p/2}(\ball\sim\ball_\rho)} \leq \frac{1-\rho^3}{3}\|\psi-\id_\stwo\|^2_{L^p(\stwo)}\end{equation*}
%and
%\begin{align*}\Bigg\| \bigg| \na\Big(\psi\big(\xx\big)\Big) - \na \Big(\xx\Big) \bigg|^2\Bigg\|_{L^{p/2}(\ball\sim\ball_\rho)} = \bigg\{ \bigg(\int_\rho^1 \rho^2\,\dd\rho\bigg) \bigg(\int_\stwo |\na\psi - \na \id_\stwo|^p\,\dd A\bigg) \bigg\}^{\frac{2}{p}}.\end{align*}

Putting the above estimates together, we arrive at
\begin{align}\label{comparison 2}
\int_\ball |\na w|^2\,\dd x & \leq \rho \int_\ball |\na u|^2\,\dd x +  \frac{1+\kappa}{(1-c_5\delta)^2} 
 \int_{\ball\sim\ball_\rho} \Big|\na \Big(\xx\Big)\Big|^2\,\dd x  + c_6 \frac{1+\kappa^{-1}}{(1-c_5\delta)^2} \delta^2,
\end{align}
where $c_6$ depends only on $p$ (via the Sobolev constant $c_5$) and $\rho$.

Now, as the topological degree of $u$ on $\p\ball_s$ is $1$ for each $s \in [\rho,1]$, we have
\begin{align}\label{new 1}
 \int_{\ball\sim\ball_\rho} \Big|\na \Big(\xx\Big)\Big|^2\,\dd x &= \int_\rho^1\int_{\p\ball_s}\Big|\na \Big(\xx\Big)\Big|^2  \,\dd \mathcal{H}^2 \,\dd s = 8\pi (1-\rho)\nonumber\\
 &\leq 2\int_\rho^1 \big| u(\p\ball_s) \big|\,\dd s = \int_\rho^1 \int_{\p\ball_s} |\sna u|^2\,\dd \mathcal{H}^2\,\dd s
\end{align}
by the area formula. Therefore, using \eqref{comparison 2}\eqref{new 1} and the monotonicity formula \eqref{monotonicity formula}, one deduces
\begin{align}\label{comparison 3}
\int_\ball |\na w|^2\,\dd x &\leq  \rho\int_{\ball}|\na u|^2\,\dd x  +\int_\rho^1\int_{\p\ball_s} |\sna u|^2\,\dd \mathcal{H}^2\,\dd s \nonumber\\
&\qquad + \bigg\{\frac{1+\kappa}{(1-c_5\delta)^2}-1\bigg\} \int_{\ball\sim\ball_\rho}\bigg|\na\Big(\frac{x}{|x|}\Big)\bigg|^2\,\dd x + c_6 \frac{1+\kappa^{-1}}{(1-c_5\delta)^2} \delta^2 \nonumber \\
&\leq \int_\ball |\na u|^2\,\dd x + 8\pi (1-\rho) \bigg\{ \frac{1+\kappa}{(1-c_5\delta)^2} -1 \bigg\} + c_6 \frac{1+\kappa^{-1}}{(1-c_5\delta)^2} \delta^2
\end{align}
for each $p>2$, $0<\rho<1, \kappa>0$ and sufficiently small $\delta>0$. 

On the other hand, as $w|\p\ball=\id_\stwo$ and $sing\, w =\{a\}$, the estimates by Brezis--Coron--Lieb (\cite{bcl}; also see the last inequality on p.117, \cite{hl}) lead to
\begin{equation}\label{lower bound}
\int_\ball |\na w|^2\,\dd x \geq 8\pi + c_7 |a|^2 %= \int_\ball |\na u|^2\,\dd x + c_7 |a|^2
\end{equation}
with a universal constant  $c_7$. Furthermore, the estimate \eqref{comparison 1} holds with $u_i$, $\delta_i$ replaced by $u$ and $\delta$, respectively. Combining with \eqref{comparison 3} and \eqref{lower bound}, we get
\begin{align}\label{comparison 5}
c_7|a|^2 &\leq 8\pi\kappa + \Big(1+\frac{1}{\kappa}\Big){(4\pi)^{\frac{p-2}{p}} } \delta^2 \nonumber \\
&\qquad + 8\pi (1-\rho) \bigg\{ \frac{1+\kappa}{(1-c_5\delta)^2} -1 \bigg\} + c_6 \frac{1+\kappa^{-1}}{(1-c_5\delta)^2} \delta^2
\end{align}

For each $\rho \in ]0,1[$ fixed, the penultimate term on the right-hand side of \eqref{comparison 5} satisfies 
\begin{align*}
 c_8 \Big\{ \kappa + 2c_5\delta + \bo(\delta^2) \Big\}\qquad \text{ as } \delta \searrow 0,
\end{align*}
where $c_8=8\pi (1-\rho)$. Also, for $0<\kappa,\delta\ll 1$, there exists $c_9=c(\rho, p)$ such that the final term of \eqref{comparison 5} can be bounded as follows:
\begin{align*}
 c_6 \frac{1+\kappa^{-1}}{(1-c_5\delta)^2} \delta^2 \leq c_9 \kappa^{-1} \delta^{2}.
\end{align*}
The optimal $\kappa>0$ we may choose is of order $\bo (\delta)$. We thus conclude from \eqref{comparison 5} that 
\begin{equation}\label{modulus of a = sing}
|a| \leq c_{10} \sqrt{\delta}
\end{equation}
for all $\delta \leq \delta_0$, where $\delta_0=c(\rho,p)>0$ is sufficiently small and $c_{10}=c(\rho, p)$.

From now on,  let us fix the parameter $\rho \in ]0,1[$.

\subsection{$W^{1,p}$-Stability for $x/|x|$ for $p >2$}\label{subsec completion of perturbation lemma}
As proved earlier in this section, $u$ has a unique singularity $a$, whose norm is controlled by $\sqrt{\delta}$ with $\|\psi-\id_\stwo\|_{W^{1,p}} \leq \delta$ and $u|\p\ball = \psi \in C^{1,\alpha}(\p\ball,\stwo)$. Here $u$ satisfies the assumptions of Theorem \ref{thm: W1,p, p>2} with $\Omega=\ball$ and $\varphi = \id_\stwo$; in particular, it is a minimising harmonic map.

Several consequences can be deduced (see p.118, \cite{hl}) ---

{\bf (i)} By $\S$ \ref{subsec: unique sing} and \cite{bcl} we have the quantisation of energy:
\begin{equation}\label{quantised energy}
\limsup_{r \searrow 0} \frac{1}{r} \int_{\ball(a,r)} |\na u|^2\,\dd x = 8\pi
\end{equation}
where $a$ is the singularity of $u$.

{\bf (ii)} The tangent map of $u$ at $a$ is unique (by Proposition \ref{propn: simon}) and takes the form $\T(x/|x|)$ with $\T \in O(3)$ (by Corollary $7.12$, Brezis--Coron--Lieb \cite{bcl}). 

{\bf (iii)} By Proposition \ref{propn: simon}, there are universal constants $\beta_0 \in ]0,1]$ and $c_{11} >0$ such that for $r>0$ sufficiently small,
\begin{equation}\label{new 2}
\left\|\frac{\p}{\p r} \bar{u}(r\bullet)\right\|_{C^1(\stwo)} + \left\|\bar{u}(r\bullet) - \Theta\right\|_{C^2(\stwo)}  \leq c_{11}\ee r^{\beta_0}.
\end{equation}
Specifically, for any $\alpha \in ]0,\beta_0[$ one has
\begin{equation}\label{B 1/2}
\left\| u - \T \left(\frac{x-a}{|x-a|}\right)\right\|_{C^{0,\alpha}(\ball_{1/2})} \leq c_{11}\ee.
\end{equation}
Here, for $\bar{u}:\ball_{1-|a|}\map\stwo$ and $\bar{u}(x):=u(x+a)$ we set
\begin{equation}\label{E}
\ee := \left\| \bar{u}-\xx \right\|_{C^2(\ball_{2/3}\sim \ball_{1/3})}.
\end{equation}

{\bf (iv)} By \cite{gw, s1, s2} there is a universal constant $c_{12}$ such that
\begin{equation}\label{matrix norm close}
\|\T - \id_{\R^3}\| \leq c_{12}\ee;
\end{equation}
here $\|\bullet\|$ denotes the matrix norm.

Having summarised {\bf (i)}--{\bf (iv)} above, let us proceed as follows.

 First, on the boundary $\p\ball$, there holds
\begin{equation*}
\Big\| \psi - \T \Big(\frac{x-a}{|x-a|}\Big)\Big\|_{W^{1,p}(\p\ball)} \leq \|\psi - \id_\stwo\|_{W^{1,p}(\p\ball)} + c_{12}\ee + \Big\|\frac{x-a}{|x-a|} - \frac{x}{|x|}\Big\|_{W^{1,p}(\p\ball)}.
\end{equation*}
But
\begin{align}\label{new 3}
\na\Big(\frac{x-a}{|x-a|}\Big) - \na \xx = \delta_{ij} \Big(\frac{1}{|x-a|} - \frac{1}{|x|} \Big) + \frac{(x-a)\otimes (x-a)}{|x-a|^3} - \frac{x\otimes x}{|x|^3},
\end{align}
thus a direct computation using $|x|=1$, $|a| \leq c_{10}\sqrt{\delta}$ yields 
\begin{equation}\label{add,1}
\Big\| \psi - \T \Big(\frac{x-a}{|x-a|}\Big)\Big\|_{W^{1,p}(\p\ball)} \leq \delta + c_{12}\ee + c_{13} \sqrt{\delta}
\end{equation}
for $c_{13}=c(p)$. 

Next, thanks to \eqref{new 2}\eqref{matrix norm close}, we have
\begin{align*}
\Big\| \psi - \T \Big(\frac{x-a}{|x-a|}\Big)\Big\|_{W^{1,p}(\p\ball_{1/2})} \leq c_{14}\ee + \Big\|\frac{x-a}{|x-a|} - \frac{x}{|x|}\Big\|_{W^{1,p}(\p\ball_{1/2})},
\end{align*}
where $c_{14}=c(\beta_0)$ with the universal constant $\beta_0$ in {\bf (iii)}. Taking $|x|=1/2$ in \eqref{new 3}, one obtains
\begin{align}\label{add,2}
\Big\| \psi - \T \Big(\frac{x-a}{|x-a|}\Big)\Big\|_{W^{1,p}(\p\ball_{1/2})} \leq c_{14}\ee +c_{15}\sqrt{\delta}
\end{align} 
for a universal constant $c_{15}$.

In what follows let us bound $\ee$ by a power of $\delta$. Then, choosing $\delta_0$ sufficiently small, for any $\delta \in ]0,\delta_0]$ we may apply the interior regularity theory (\cite{su1}) and Lemma \ref{lem: boundary reg} to deduce from \eqref{add,1}, \eqref{add,2} that
\begin{equation}\label{B 1- 1/2}
\Big\| u - \T \Big(\frac{x-a}{|x-a|}\Big)\Big\|_{C^{0,\alpha}(\ball \sim \ball_{1/2})} \leq c_{16}(\ee + \sqrt{\delta}).
\end{equation}
Here $c_{16}=c(p)$ is determined from $c_{12}, \ldots, c_{15}$ (one may shrink $\alpha \in ]0,\beta_0[$ if necessary to make it smaller than the universal constant $\beta$ in Lemma \ref{lem: boundary reg}). The desired bound for $\ee$ is achieved by adapting the arguments on pp.119--120, \cite{hl}.

To this end, we first notice that
\begin{equation}\label{E1}
\ee \leq J_1 + J_2:= \Big\|u-\xx\Big\|_{C^2(\ball_{3/4} \sim \ball_{1/4})} + \Big\|\xx - \frac{x-a}{|x-a|}\Big\|_{C^2(\ball_{2/3} \sim \ball_{1/3})},
\end{equation}
where
\begin{equation}\label{E2}
J_2 \leq c_{17}|a|,\qquad J_1 \leq c_{17} B.
\end{equation}
By interior regularity, $B$ can be chosen as an upper bound for the $L^2$-norm of $(u-x/|x|)$ in the larger annulus $\ball \sim \ball_{1/8} \supset \ball_{3/4}\sim \ball_{1/4}$; the constant $c_{17}=c(p)$.

Then, write $x = r\omega$ for $r=|x| \in [1/8,1]$, $\omega=x/|x|\in\stwo$; we have
\begin{align*}
& \int_{\ball \sim \ball_{1/8}} \Big| u(x) - \xx\Big|^2\,\dd x\\
&\quad \leq 2\int_{1/8}^1 \int_\stwo \bigg\{|u(r\omega) - \psi(\omega)|^2 + |\psi(\omega)-\omega|^2\bigg\}r^2\,\dd A(\omega)\,\dd r =: J_{11} + J_{12}.
\end{align*}
An application of H\"{o}lder's inequality yields
\begin{align*}
J_{12} &= 2\left(\int^1_{1/8} r^2\,\dd r\right) \int_\stwo |\psi - \id_\stwo|^2\,\dd A \\
&\qquad\qquad\leq \frac{2}{3}\left(1-\frac{1}{8^3}\right) \|\psi-\id_\stwo\|^2_{L^p} \left|\stwo\right|^{\frac{p-2}{p}} \leq c_{18} \delta^2,
\end{align*}
and a direct computation gives us
\begin{align*}
J_{11} &= 2\int_{1/8}^1 \int_\stwo \left|\int_r^1 \frac{\p u}{\p r}(s\omega)\,\dd s\right|^2r^2\,\dd A(\omega)\,\dd r\\
&\qquad\qquad\leq 2\int_{1/8}^1 \left\|\frac{\p u}{\p r}\right\|^2_{L^2(\ball\sim\ball_{r})}(1-r)\,\dd r \leq c_{19} \left\|\frac{\p u}{\p r}\right\|^2_{L^2(\ball\sim\ball_{1/8})},
\end{align*}
where $c_{18}=c(p)$ and $c_{19}$ is a universal constant. But $\|\p u/\p r\|_{L^2(\ball\sim\ball_{1/8})}$ can be controlled by the monotonicity formula \eqref{monotonicity formula} and the quantisation of energy \eqref{quantised energy}:
\begin{align*}
\left\|\frac{\p u}{\p r}\right\|^2_{L^2(\ball\sim\ball_{1/8})} \leq \int_\ball |\na u|^2\,\dd x - (1-8|a|)8\pi.
\end{align*}
Furthermore, recall from \eqref{comparison 1}:% that for any $\kappa>0$, we can bound
\begin{equation*}
\int_\ball |\na u|^2\,\dd x - 8\pi \leq 8\pi \kappa + \left(1+\frac{1}{\kappa}\right){(4\pi)^{\frac{p-2}{p}}} \delta^2.
\end{equation*}
Putting together the above estimates, one obtains
\begin{align}\label{comparison 4}
 \int_{\ball \sim \ball_{1/8}} \left| u(x) - \xx\right|^2\,\dd x \leq c_{18}\delta^2 + c_{19}\left\{64\pi |a| + 8\pi \kappa + \left(1+\frac{1}{\kappa}\right)(4\pi)^{\frac{p-2}{p}} \delta^2 \right\}.
\end{align}
In view of \eqref{modulus of a = sing}, the best decay rate of the right-hand side of \eqref{comparison 4} is $\bo (\sqrt{\delta})$ --- {\it e.g.}, by choosing $\kappa =\bo({\delta})$. 

Therefore, taking the square root of \eqref{comparison 4} and utilising \eqref{E1}, \eqref{E2}, and \eqref{modulus of a = sing}, we can choose $\delta_0 >0$ sufficiently small such that, for $0<\delta \leq \delta_0$, there holds
\begin{equation*}
\ee \leq c_{20} \delta^{\frac{1}{4}}.
\end{equation*}
The constant $c_{20}=c(p)$. Moreover, by \eqref{B 1- 1/2} and \eqref{B 1/2}, for any sufficiently small $\alpha >0$ we have 
\begin{equation}\label{xx}
\left\| u - \T \left(\frac{x-a}{|x-a|}\right)\right\|_{C^{0,\alpha}(\ball)} \leq c_{21} \delta^{\frac{1}{4}}\qquad \text{ where } c_{21} = c(p).
\end{equation}

In summary, we obtain the following analogue of the Perturbation Lemma in \cite{hl}:
\begin{lemma}\label{lemma: perturbation}
Let $\psi \in C^{1,\alpha}(\p\ball,\stwo)$, $2<p\leq \infty$ and $\delta:=\|\psi-\id_\stwo\|_{W^{1,p}}$. There are positive constants $\delta_0$ and $c$ (depending on $p$) and $\alpha \in ]0,1[$, such that for any $\delta \in ]0, \delta_0]$ and $u \in W^{1,2}(\ball,\stwo)$ minimising the Dirichlet energy with $u|\p\ball = \psi$, one has $${\rm sing}\, u = \{a\}, \qquad |a| \leq c \sqrt{\delta}, \qquad \text{ and } \qquad \left\| u - \T \left(\frac{x-a}{|x-a|}\right)\right\|_{C^{0,\alpha}(\ball)} \leq c \delta^{1/4},$$
where $\T \in \bo(3)$ with $\|\T - \id_{\R^3}\|\leq c\delta^{1/4}.$
\end{lemma}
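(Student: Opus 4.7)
The plan is to assemble the three assertions of the lemma in three successive stages, each leaning on the tools already developed: Lemma~\ref{lem: boundary reg} for uniform boundary control, the monotonicity formula, the Brezis--Coron--Lieb rigidity for the hedgehog, and Simon's uniqueness of tangent maps (Proposition~\ref{propn: simon}). The overall strategy is by contradiction-compactness for the topological conclusion, and by constructing explicit competitors combined with asymptotic-tangent-map estimates for the quantitative parts.

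First, I would establish $sing\,u=\{a\}$ by a contradiction-compactness argument. Suppose there is a sequence $\psi_i\to\id_\stwo$ in $W^{1,p}$ with minimisers $u_i$ having at least two singularities. Comparing $u_i$ against the $0$-homogeneous extension $\psi_i(x/|x|)$ and using $\int_\stwo|\sna\id_\stwo|^2\,\dd A=8\pi$ together with the Sobolev embedding $W^{1,p}(\p\ball)\hookrightarrow L^2(\p\ball)$ gives $\limsup_i E[u_i]\leq 8\pi$. The Brezis--Coron--Lieb characterisation then forces the weak limit to be $x/|x|$, and a Luckhaus-type improvement (as in Simon~\cite{s2}) upgrades convergence to strong $W^{1,2}$. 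Lemma~\ref{lem: boundary reg} keeps $sing\,u_i$ uniformly away from $\p\ball$, while interior regularity of Schoen--Uhlenbeck shrinks $\operatorname{diam}(sing\,u_i)$ to zero; after translating so the singularity sits at the origin, Proposition~\ref{propn: simon} applied to the limit $x/|x|$ (whose singular set is $\{0\}$) yields a single singular point for large $i$, a contradiction.

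Second, for $|a|\leq c\sqrt\delta$, I would construct a competitor $w$: set $w(x)=u(x/\rho)$ on $\ball_\rho$ and, on the annulus, normalise the linear interpolation $z(x)$ between $\psi(x/|x|)$ and $x/|x|$. Using $(a+b)^2\leq(1+\kappa)a^2+(1+\kappa^{-1})b^2$, H\"older's inequality, and the area-formula lower bound $|u(\p\ball_s)|\geq 4\pi$ (valid since $\deg u=1$), the energy comparison yields $E[w]\leq E[u]+8\pi(1-\rho)\{(1+\kappa)(1-c_5\delta)^{-2}-1\}+O(\kappa^{-1}\delta^2)$. Pairing with the Brezis--Coron--Lieb lower bound $E[w]\geq 8\pi+c_7|a|^2$ and the upper bound $E[u]\leq 8\pi(1+\kappa)+O(\kappa^{-1}\delta^2)$ from the homogeneous extension, then optimising $\kappa\sim\delta$ for $\rho$ fixed, produces $|a|^2\lesssim\delta$.

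Third, for the $C^{0,\alpha}$-bound and matrix estimate, Proposition~\ref{propn: simon} gives uniqueness of the tangent map at $a$, and Corollary~7.12 of \cite{bcl} identifies it as $\T(x/|x|)$ for some $\T\in\bo(3)$. Simon's quantitative decay~\eqref{new 2} and the Gulliver--White estimate yield $\|\T-\id_{\R^3}\|\lesssim\ee$ and a $C^{0,\alpha}(\ball_{1/2})$-bound in terms of the mid-annulus excess $\ee$. On $\ball\sim\ball_{1/2}$, the $W^{1,p}$-closeness of $\psi$ to $\T((x-a)/|x-a|)$ (controlled using $|a|\lesssim\sqrt\delta$ and the explicit derivative formula~\eqref{new 3}) combined with Lemma~\ref{lem: boundary reg} and interior regularity upgrades to $C^{0,\alpha}$-closeness. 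It remains to bound $\ee$: writing $u-x/|x|$ in polar coordinates, splitting radially, and using the monotonicity formula together with the quantisation $\limsup_{r\searrow 0}r^{-1}\int_{\ball(a,r)}|\na u|^2=8\pi$ from~\eqref{quantised energy}, I would obtain $\int_{\ball\sim\ball_{1/8}}|u-x/|x||^2\lesssim\delta+|a|\lesssim\sqrt\delta$, hence $\ee\lesssim\delta^{1/4}$ by the $L^2$-to-$C^2$ interior-regularity upgrade on a slightly smaller annulus.

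The main obstacle is the third stage: the bootstrap by which the $W^{1,p}$-closeness of boundary data is converted into pointwise control of $u$ near the interior singularity relies on a delicate interplay between Simon's asymptotic uniqueness at $a$ and the boundary regularity of Lemma~\ref{lem: boundary reg}, and the decay rate $\delta^{1/4}$ is dictated by the square-root loss in $|a|\lesssim\sqrt\delta$ passing through the $L^2$-to-$C^2$ step. Also, the reliance on $W^{1,p}\hookrightarrow C^0$ for the constant $c_5$ in~\eqref{sobolev} explains why this subsection is restricted to $p>2$; the endpoint $p=2$ must be treated by the gluing scheme of \S\ref{sec: general}.
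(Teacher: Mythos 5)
Your proposal follows the paper's proof (spread over \S\S\,3.1--3.3) step for step: the same contradiction--compactness argument with the homogeneous-extension comparison, Brezis--Coron--Lieb rigidity, Luckhaus upgrade and Lemma~\ref{lem: boundary reg} to isolate a single singularity; the same competitor $w$ on $\ball_\rho$ and annulus, with the degree/area-formula inequality, monotonicity and BCL lower bound optimised at $\kappa \sim \delta$ to get $|a|\lesssim\sqrt\delta$; and the same use of Proposition~\ref{propn: simon}, BCL Corollary~7.12, Gulliver--White and the $L^2$-to-$C^2$ annular estimate to bound $\ee\lesssim\delta^{1/4}$ and conclude. This is essentially identical in structure, decomposition and choice of lemmas to the paper's argument, including the observation of where the $\delta^{1/4}$ rate and the $p>2$ restriction originate.
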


\section{Proof of Theorem \ref{thm: W1,p, p>2}}\label{sec: general}

\subsection{The case $2<p\leq \infty$}\label{subsec: 2<p}
With Lemma \ref{lemma: perturbation} at hand, Theorem \ref{thm: W1,p, p>2} follows as in $\S 3$ of \cite{hl} for the case $p > 2$. To be self-contained we sketch the arguments below.

Assume $u_i: \Omega \map \stwo$ are energy-minimisers with $u_i|\p\Omega = \psi_i \in C^{1,\alpha}(\p\ball,\stwo)$, such that $\|\psi_i-\varphi\|_{W^{1,p}(\stwo)} \map 0$ as $i \nearrow \infty$, and that $v:
\Omega \map \stwo$ is the {\em unique} minimiser with $v|\p\Omega = \varphi$. Then $\int_\Omega |\na u_i|^2\,\dd x$ is bounded ({\it e.g.}, by comparing with the harmonic extensions of $\psi_i$ and the uniform bound on $\|\psi_i\|_{W^{1,p}(\stwo)}$, $p>2$), $u_i \map v$ strongly in $W^{1,2}$ (by Theorem $6.4$, \cite{hl2}), and $sing\, v$ is a finite set (by Theorem 2, \cite{su1}) --- call it $\{a_j\}_{j=1}^k \subset \Omega$.

As before, the tangent map of $v$ at each $a_j$ is unique and equals $\T_j(x/|x|)$ for $\T_j \in \bo(3)$. For $0<\tau<\min\{{\rm dist} (a_j, ({\rm sing}\, v \sim \{a_j\})\cup\p\Omega )\}/2$, we have
\begin{equation}\label{add,3}
\|u_i-v\|_{W^{1,p}(\p\ball(a_j,\tau))} \longrightarrow 0 
\end{equation}
thanks to Simon's asymptotic theory (Proposition \ref{propn: simon}; also see \cite{s1, s2}) and a standard compactness argument. 

Denote by $\delta_i$ the larger of $\|\psi_i-\varphi\|_{W^{1,p}(\stwo)}$ and $\|u_i-v\|_{W^{1,p}(\p\ball(a_j,\tau))}$. Utilising the interior regularity theory (\cite{su1}) and the uniform boundary regularity Lemma \ref{lem: boundary reg}, one may infer that
\begin{equation}\label{new 4}
\|u_i - v\|_{C^{0,\alpha}(\Omega \sim \bigcup_{1\leq j \leq k}\ball(a_j,\tau))} \leq c_{22} \delta_i.
\end{equation} 
This gives the desired stability of minimisers away from the singularities of the limiting map.

Now, apply the arguments in $\S \ref{sec: ball}$ to each $\ball(a_j, \tau)$, $1\leq j \leq k$ and $u_i$ for large enough $i$. For each pair $(i,j)$, there exists a unique point $a_{ji} \in \ball(a_j,\tau)$ such that $sing\,u_i=\{a_{ji}\}$. Moreover, there are rotations $\T_{ji} \in \bo(3)$ so that
\begin{equation}
\sup_{1\leq j \leq k}\bigg\{|a_{ji}-a_j| + \|\T_{ji}-\T_j\| + \Big\| u_i -\T_{ji}\Big(\frac{x-a_{ji}}{|x-a_{ji}|}\Big)\Big\|_{C^{0,\alpha}(\ball(a_j,\tau))}\bigg\} \leq c_{23} \delta_i^{1/4}.
\end{equation}
Also, set
\begin{equation*}
\tau_i := \max_{1\leq j \leq k}|a_{ji}-a_j|^{1/2} \leq c_{24} \delta_i^{1/8}.
\end{equation*}

Finally, we construct the bi-Lipschitz homeomorphism $\eta: \Omega \map \Omega$ such that some H\"{o}lder norm of $(u_i - v \circ \eta)$ and $\|\eta - \id_\Omega\|_{\rm Lip}+\|\eta^{-1} - \id_\Omega\|_{\rm Lip}$ are both made arbitrarily small. Define $\eta_i$ for each $i$, such that $\eta_i = \id$ away from $sing\, v$, and near each $a_j$, $\eta_i$ maps $a_{ji}$ (the singularity of $u_i$) to $a_j$. In between, $\eta_i$ is connected by a smooth bump function. Then we take $\eta=\eta_i$ for large enough $i$.  More precisely, as on p.121, \cite{hl} we set
\begin{equation*}
\eta_i := \begin{cases}
\id\qquad\qquad\qquad\qquad\qquad\quad \text{ on } \Omega \sim \cup_{j=1}^k \ball(a_j, \tau_i),\\
\lambda_{ji} \xi_{ji} + (1-\lambda_{ji})\id\qquad\qquad \text{ on } \ball(a_j, \tau_i) \text{ for each } 1\leq j \leq k.
\end{cases}
\end{equation*}
Here $\xi_{ji}(x) = \T_j^{-1}\T_{ji} (x-a_{ji}) + a_j$ and $\lambda_{ji} \in C^\infty(\Omega, [0,1])$, $\lambda_{ji} \equiv 1$ on $\ball(a_j, \tau_i/2)$, $\lambda_{ji} \equiv 0$ on $\Omega \sim \ball(a_j, \tau_i)$ and $|\na \lambda_{ji}| \leq 2\tau_i$. Then, for sufficiently large $i$ and $\alpha' < \alpha/10$, we have 
\begin{equation}
\|\eta^{-1}_i - \id_\Omega\|_{\rm Lip}+\|\eta_i - \id_\Omega\|_{\rm Lip} \leq c_{24} \delta_i^{1/8},\qquad \|u_i - v \circ \eta_i\|_{C^{0,\alpha'}} \leq c_{24} \delta_i^{1/4}.
\end{equation}

This completes the proof of Theorem \ref{thm: W1,p, p>2} for $p>2$.

\subsection{The case $p=2$}
Now let us modify the preceding arguments to deal with the critical case $p=2$.  The uniform boundary regularity Lemma \ref{lem: boundary reg} holds for $p=2$, and the only place we used $p>2$ is the Sobolev--Morrey embedding \eqref{sobolev}. So we just need to modify the arguments in $\S \ref{sec: ball}$. 

Indeed, as the boundary maps $\psi, \id_\stwo:\p\ball \map \stwo$ take values in the unit sphere, for $\psi \in C^{1,\alpha}(\p\ball,\stwo)$ we have $\|\psi-\id_\stwo\|_{W^{1,\infty}(\stwo)} \leq c_{25}$, which depends only on the Lipschitz norm of $\psi$. Thus, applying the interpolation inequality
\begin{equation*}
\|f\|_{L^q} \leq \|f\|^{2/q}_{L^2} \|f\|^{1-2/q}_{L^\infty},\qquad q>2
\end{equation*}
to $f=\psi-\id_\stwo$ and $f=\sna \psi - \sna\id_\stwo$, we can find a constant $c_{26}=c(q,\|\psi\|_{\rm Lip})$ such that
\begin{equation}
\|\psi - \id_\stwo\|_{W^{1,q}(\stwo)} \leq c_{24} \delta^{2/q} =: \bar{\delta},
\end{equation}
whenever $q \in ]2,\infty[$ and 
\begin{equation}
\|\psi - \id_\stwo\|_{L^2(\stwo)} \leq \delta.
\end{equation}

Now, one may repeat the arguments in $\S \S \ref{subsec: modulus of sing}, \ref{subsec completion of perturbation lemma}$ with $\bar\delta$ in place of ${\delta}$. In this way, equations \eqref{xx}\eqref{modulus of a = sing}\eqref{matrix norm close} become, respectively,
\begin{eqnarray*}
&&\Big\| u - \T \Big(\frac{x-a}{|x-a|}\Big)\Big\|_{C^{0,\alpha}} \leq c_{27} \delta^{{1}/{2q}},\\
&&|a| \leq c_{27} \delta^{{1}/{q}},\\
&&\|\T - \id_{\R^3}\| \leq c_{27} \delta^{{1}/{2q}},
\end{eqnarray*}
where $c_{27}=c(q,\|\psi\|_{\rm Lip})$. Therefore, a straightforward adaptation of the proof in $\S \ref{subsec: 2<p}$ gives us
\begin{eqnarray*}
&&\|\eta^{-1}_i - \id_\Omega\|_{\rm Lip}+\|\eta_i - \id_\Omega\|_{\rm Lip} \leq c_{28} \delta_i^{1/2q},\\
&&\|u_i - v \circ \eta_i\|_{C^{0,\alpha'}} \leq c_{28} \delta_i^{1/2q}
\end{eqnarray*}
with $c_{28}=c(q,\|\psi\|_{\rm Lip})$.

We fix an arbitrary $q \in ]2,\infty[$ to conclude the proof of Theorem \ref{thm: W1,p, p>2} for $p=2$.

\section{Remarks and Prospective Questions}

{\bf 1.} It is interesting to investigate the boundary stability of minimising harmonic maps with axial symmetry ({\it cf.} Hardt--Lin--Poon \cite{hlp}, Hardt--Kinderlehrer--Lin \cite{hkl2}, and Hardt--Li \cite{us}). That is, the map $u: \ball \map \stwo$ is determined by its value on the ``orbit space'' $\{(r,z): 0\leq r\leq 1, r^2+z^2 \leq 1\}$, where $r=\sqrt{x^2+y^2}$ for $(x,y,z) \in \ball$. In this case, the singularities are at most a discrete set on the $z$-axis, but the proof of the inequality \eqref{caccioppoli} does not hold any more. The obstruction appears in an application of Fubini's theorem for the projection $\Pi_a$. %Therefore, it is unclear if the uniform boundary regularity (hence boundary data stability) remains true. 

{\bf 2.} We proved the stability of energy-minimisers under $W^{1,p}$-perturbations of the boundary maps under suitable uniqueness conditions, $p\geq 2$; Hardt--Lin \cite{hl} proved for $p=\infty$. This is in sharp contrast to the $p<2$ case in \cite{ms} by Mazowiecka--Strzelecki; also see Almgren--Lieb \cite{al}. In the nice recent work \cite{mms}, Mazowiecka--Mi\'{s}kiewicz--Schikorra proved (Theorem $7.1$ therein): 

Let $\Omega \subset \R^3$ be a bounded smooth domain. Let $s \in ]1/2,1]$ and $p\in [2,\infty[$. There are constants $R,\gamma$ depending only on $\Omega$ such that the following holds. Assume $v\in W^{1,2}(\Omega;\stwo)$ is the unique minimising harmonic map with $v|\p\Omega=\psi$. For any $\e>0$, there is $\delta=\delta(\Omega,\e,\psi)>0$ such that if $u \in W^{1,2}(\Omega,\stwo)$ is a minimising harmonic map with $u|\p\Omega=\varphi$ satisfying
\begin{equation*}
\sup_{\ball(y,\rho)\Subset\Omega, \, \rho < R} \Big\{ \rho^{sp-2} [\psi]^p_{W^{s,p}(\p\Omega\cap\ball(y,\rho))} \Big\}< \gamma
\end{equation*}
and
\begin{equation*}
[\psi-\varphi]_{W^{s,p}(\p\Omega)} \leq \delta,
\end{equation*}
then $u$ has the same number of singularities as $v$. Moreover, we have $\|u-v\|_{W^{1,2}}\leq \e$.

The above result in \cite{mms} by Mazowiecka--Mi\'{s}kiewicz--Schikorra has weaker regularity assumption on the boundary map --- $\psi \in W^{s,p}(\p\Omega,\stwo)$ --- compared to $\psi \in C^{1,\alpha}(\p\Omega,\stwo)$ in Theorem \ref{thm: W1,p, p>2} above. On the other hand, we bound the distance between $u$ and $v$ in a H\"{o}lder norm modulo bi-Lipschitz homeomorphisms, in comparison with the $W^{1,2}$-norm in \cite{mms}.


\begin{thebibliography}{99}

\bibitem{aa}
F.~J. Almgren Jr., W.~K. Allard, On the radial behaviour of minimal surfaces and the uniqueness of their tangent cones, \textit{Ann. of Math.} \textbf{113} (1981), 215--265.

\bibitem{al}
F.~J. Almgren Jr., E.~H. Lieb, {Singularities of energy minimizing maps from the ball to the sphere: examples, counterexamples and bounds}, \textit{Ann. of Math.} \textbf{128} (1988), 483--530.


\bibitem{bg}
H.-J. Borchers, W.~D. Garber, {Analyticity of solutions of the $O(N)$ nonlinear $\sigma$-model}, \textit{Commun. Math. Phys.} \textbf{71} (1980), 299--309. 

\bibitem{bcl}
H. Brezis, J.-M. Coron, E.~H. Lieb, {Harmonic maps with defects}, \textit{Commun. Math. Phys.} \textbf{107}, (1986), 649--705.

\bibitem{gw}
R. Gulliver, B. White, {The rate of convergence of a harmonic map at a singular point}, \textit{Math. Ann.} \textbf{283} (1989), 539--549.

\bibitem{hkl}
R. Hardt, D. Kinderlehrer, F.-H. Lin, {Existence and partial regularity of static liquid crystal configurations}, \textit{Commun. Math. Phys.} \textbf{105} (1986), 547--570.

\bibitem{hkl-add}
R. Hardt, D. Kinderlehrer, F.-H. Lin, {Stable defects of minimizers of constrained variational principles}, \textit{Ann. Inst. H. Poincar\'{e} Anal. Non Lin\'{e}aire} \textbf{5} (1988), 297--322.

\bibitem{hkl2}
R. Hardt, D. Kinderlehrer,  F.-H. Lin, {The variety of configurations of static liquid crystals}, in: Variational methods (Paris, 1988), \textit{Progr. Nonlinear Differential Equations Appl.}, \textbf{4}, pp.115--131, Birkh\"{a}user Boston, Boston, MA, 1990. 



\bibitem{us}
R. Hardt, S. Li, {General axially symmetric harmonic maps}, ongoing manuscript (2018). 

\bibitem{hl}
R. Hardt, F.-H. Lin, {Stability of singularities of minimizing harmonic maps}, \textit{J. Diff. Geom.} \textbf{29} (1989), 113--123.

\bibitem{hl2}
R. Hardt, F.-H. Lin, {Mappings minimizing the $L^p$ norm of the gradient}, \textit{Comm. Pure Appl. Math.} \textbf{40} (1987), 555--588.

\bibitem{hlp}
R. Hardt, F.-H. Lin, C.-C. Poon, {Axially symmetric harmonic maps minimizing a relaxed energy}, \textit{Comm. Pure Appl. Math.} \textbf{45} (1992), 417--459.

\bibitem{l}
S. Luckhaus, {Partial H\"{o}lder continuity for minima of certain energies among maps into a Riemannian manifold}, \textit{Indiana Univ. Math. J.} \textbf{37} (1988), 349--367.

\bibitem{mms}
K. Mazowiecka, M. Mi\'{s}kiewicz and A. Schikorra, {On the size of the singular set of minimizing harmonic maps into the sphere in dimension three}, \textit{ArXiv preprint}: 1811.00515. 

\bibitem{ms}
K. Mazowiecka, P. Strzelecki, {The Lavrentiev gap phenomenon for harmonic maps into spheres holds on a dense set of zero degree boundary data}, \textit{Adv. Calc. Var.} \textbf{10} (2017), 303--314.

\bibitem{moser}
R. Moser, \textit{Partial Regularity for Harmonic Maps and Related Problems}, World Scientific Publishing Co. Pte. Otd., Hackensack, NJ, 2005. 

\bibitem{r}
T. Rivi\`{e}re, {Everywhere discontinuous harmonic maps into spheres}, \textit{Acta Math.} \textbf{175} (1995), 197--226.

\bibitem{su1}
R. Schoen, K. Uhlenbeck, {A regularity theory for harmonic maps}, \textit{J. Diff. Geom.} \textbf{17} (1982), 307--335.
 

\bibitem{su2}
R. Schoen, K. Uhlenbeck, {Boundary regularity and the Dirichlet problem for harmonic maps}, \textit{J. Diff. Geom.} \textbf{18} (1983), 253--268.


\bibitem{s1}
L. Simon, {Asymptotics for a class of nonlinear evolution equations, with applications to geometric problems}, \textit{Ann. of Math.}  \textbf{118} (1983), 525--571.

\bibitem{s2}
L. Simon, {Isolated singularities of extrema of geometric variational problems}, \textit{Lecture Notes in Math.}, Vol. 1161, Springer, Berlin, 1985.



\end{thebibliography}
\end{document}